\newtheorem{theorem}{Theorem}[section]
\newtheorem*{theorem*}{Theorem}
\newtheorem{lemma}{Lemma}[section]
\newtheorem{corollary}[theorem]{Corollary}
\def\aint{\frac{\ \ }{\ \ }{\hskip -0.4cm}\int}
\numberwithin{equation}{section}
\begin{document}
	\title[Holomorphic mappings]{Liouville  theorems and a Schwarz Lemma for holomorphic mappings between K\"ahler manifolds}

\author{Lei Ni}\thanks{The research is partially supported by  ``Capacity Building for Sci-Tech Innovation-Fundamental Research Funds".  }
\address{Lei Ni. Department of Mathematics, University of California, San Diego, La Jolla, CA 92093, USA}
\email{lni@math.ucsd.edu}


\begin{abstract} We derive some consequences of the Liouville theorem for plurisubharmonic functions of L.-F.  Tam and the author. The first result provides a nonlinear version of the complex splitting theorem (which splits off a factor of $\mathbb{C}$ isometrically from the simply-connected K\"ahler manifold with nonnegative bisectional curvature and  a linear growth holomorphic function) of L.-F. Tam and the author. The second set of results concerns the so-called $k$-hyperbolicity and its connection with the negativity of the $k$-scalar curvature (when $k=1$ they are the negativity of holomorphic sectional curvature and Kobayashi hyperbolicity) introduced recently in \cite{Ni-Zheng2} by F. Zheng and the author. We lastly  prove a new Schwarz Lemma type estimate in terms of {\it  only the  holomorphic sectional curvatures of both domain and target manifolds}.
\end{abstract}

\maketitle

\section{Introduction}

 The first goal of this paper is to derive some consequences of the Liouville theorem proved in \cite{NT1}, which asserts that {\it any continuous plurisubharmonic function $u(x)$ defined on a K\"ahler manifold with nonnegative bisectional curvature satisfying that $u(x)=o(\log(r(x)))$, where $r(x)$ denotes the distance function to a fixed point $p$, must be a constant.} We should mention that this result was later generalized in \cite{Liu} and \cite{Ni-Niu2} with weaker assumptions on the curvature. These extension are however not needed for the discussion here.

\begin{theorem}\label{thm1} Let $M^m$ and  $N^n$ be  two complete K\"ahler manifolds.  Assume that the bisectional curvature of $M$ is nonnegative and the bisectional curvature of $N$ is non-positive. (i) Then any holomorphic map $f: M\to N $, whose differential $df$ satisfies
\begin{equation}\label{eq:11}
\limsup_{r(x)\to \infty} \frac{\|df\|(x)}{r^\epsilon (x)}=0
\end{equation}
for any $\epsilon>0$, where $r(x)$ is the distance of $x$ to a fixed point $p\in M$, is totally geodesic.
(ii) If $m=1$, then the same result holds under a weaker assumption that $N$ has nonpositive holomorphic sectional curvature.
\end{theorem}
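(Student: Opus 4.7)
The plan is to produce a plurisubharmonic function on $M$ with $o(\log r)$ growth, invoke the Liouville theorem of \cite{NT1} to conclude that it is constant, and then extract total geodesicity from equality in the underlying Bochner identity. The natural candidate is $u := \log \|\partial f\|^{2}$.

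\emph{Step 1 (plurisubharmonicity).} I would view $\partial f$ as a holomorphic section of the Hermitian holomorphic bundle $E := T^{*(1,0)}M \otimes f^{*}T^{(1,0)}N$. Away from the analytic set $\{df = 0\}$ the standard Hermitian bundle identity reads
$$
\ddbar \log \|\partial f\|^{2} \,=\, \frac{\|D'(\partial f)\|^{2}\,\|\partial f\|^{2} - |\langle D'(\partial f), \partial f\rangle|^{2}}{\|\partial f\|^{4}} \,-\, \frac{\langle \Theta^{E}\partial f, \partial f\rangle}{\|\partial f\|^{2}},
$$
the first summand being a nonnegative $(1,1)$-form by Cauchy-Schwarz. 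Since $\Theta^{E} = \Theta^{T^{*}M}\otimes \id + \id \otimes f^{*}\Theta^{TN}$, and since (a) nonnegativity of the bisectional curvature of $M$ makes $TM$ Griffiths-semipositive, so that $\langle \Theta^{T^{*}M}\eta, \eta\rangle \le 0$, while (b) nonpositivity of the bisectional curvature of $N$ makes $TN$ Griffiths-seminegative, so that $\langle f^{*}\Theta^{TN} w, w\rangle \le 0$, contracting with $s = \partial f$ yields $\langle \Theta^{E}\partial f,\partial f\rangle \le 0$ as a $(1,1)$-form. Hence $u$ is plurisubharmonic on $\{df \ne 0\}$ and, as the logarithm of a smooth nonnegative function, extends to a plurisubharmonic function on all of $M$.

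\emph{Step 2 (Liouville and rigidity).} Hypothesis (\ref{eq:11}) gives, for each $\epsilon > 0$, the upper bound $u(x) \le 2\epsilon \log r(x) + C_{\epsilon}$ for $r(x)$ large, so $u$ is bounded above by $o(\log r)$. To meet the continuity hypothesis of \cite{NT1}, I would pass to the smooth regularization $u_{\delta} := \log(\|\partial f\|^{2} + \delta)$ and verify that it is plurisubharmonic using $\ddbar \|\partial f\|^{2} \ge 0$ (a by-product of Step~1, since $e^{u}$ is convex and increasing) together with the algebraic identity $\ddbar \log(F+\delta) = (F+\delta)^{-1}\ddbar F - (F+\delta)^{-2}\partial F \wedge \bar\partial F$. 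The function $u_{\delta}$ is smooth, bounded below by $\log \delta$, and $o(\log r)$ above, so \cite{NT1} forces $u_{\delta} \equiv \text{const}$, i.e.\ $\|\partial f\|^{2}$ is constant on $M$. Feeding this back into the trace of the Bochner identity,
$$
0 \,=\, \Delta \|\partial f\|^{2} \,=\, \|\nabla \partial f\|^{2} \,+\, (\mathrm{Ric}^{M}\text{-term}) \,-\, (R^{N}\text{-term}),
$$
and observing that every summand on the right is pointwise $\ge 0$ under the hypotheses, we conclude that each vanishes separately; in particular $\nabla \partial f \equiv 0$, which is precisely the statement that $f$ is totally geodesic.

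\emph{Part (ii) and main obstacle.} When $m = 1$, plurisubharmonicity reduces to subharmonicity, and the only component of the curvature tensor of $N$ that enters the Chern-Lu computation is $R^{N}(df(\partial_{z}), \overline{df(\partial_{z})}, df(\partial_{z}), \overline{df(\partial_{z})})$, namely the holomorphic sectional curvature of $N$ along the (at most one-dimensional) image of $df$. The bisectional assumption on $N$ therefore relaxes to nonpositive holomorphic sectional curvature, and the argument of Steps~1--2 is otherwise identical. The most delicate step, in my view, is justifying the \emph{full} $(1,1)$-Hessian inequality in Step~1 (as opposed to only its trace) under \emph{bisectional} curvature hypotheses on both factors, together with handling the indeterminate set $\{df = 0\}$ with enough care to legitimately invoke the continuous-psh Liouville theorem of \cite{NT1}.
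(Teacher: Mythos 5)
Your proposal follows essentially the same route as the paper: your Step 1 is the content of Lemma \ref{lemma1-1} and Corollary \ref{coro-to-lm1} (the paper works from the outset with $u=\log(1+\|\partial f\|^2)$, i.e.\ your $u_\delta$ with $\delta=1$, so the regularization and the extension of $\log\|\partial f\|^2$ across $\{df=0\}$ are never needed), and the paper then applies the Liouville theorem of \cite{NT1} to the $o(\log r)$ growth and reads off total geodesicity from the surviving Hessian term, exactly as you do. For $m=1$ your observation that only the holomorphic sectional curvature of $N$ along the one-dimensional image of $df$ enters is the same as Corollary \ref{coro-to-lm1}(b). Your worry about the full $(1,1)$ inequality versus its trace is unfounded: since the curvature is contracted with a single direction $v$, Griffiths semi-negativity of $T^{*(1,0)}M$ and of $f^*T^{(1,0)}N$ makes $\Theta^E_{v\bar v}$ a negative semidefinite endomorphism, so the pointwise bound holds, as Lemma \ref{lemma1-1} confirms.

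One justification in your Step 2 does not hold as written: plurisubharmonicity of $u_\delta=\log(\|\partial f\|^2+\delta)$ does not follow from $\partial\bar\partial\|\partial f\|^2\ge 0$ combined with the identity $\partial\bar\partial\log(F+\delta)=(F+\delta)^{-1}\partial\bar\partial F-(F+\delta)^{-2}\partial F\wedge\bar\partial F$, because the second term is a nonpositive $(1,1)$-form that is not controlled by $\partial\bar\partial F\ge 0$ alone. The fix is immediate and is what Corollary \ref{coro-to-lm1} actually does: run your Step 1 Cauchy--Schwarz argument directly on $\log(\|s\|^2+\delta)$, using $|\langle D's,s\rangle|^2\le \|D's\|^2\,\|s\|^2\le \|D's\|^2\,(\|s\|^2+\delta)$, which yields the stronger pointwise bound
\begin{equation*}
\left\langle \sqrt{-1}\,\partial\bar\partial\, u_\delta,\ \tfrac{1}{\sqrt{-1}}\,v\wedge\bar v\right\rangle \ \ge\ \frac{\|D_v\,\partial_{(\cdot)} f\|^2}{(\|\partial f\|^2+\delta)^2};
\end{equation*}
alternatively, compose the psh function $\log\|\partial f\|^2$ with the convex increasing function $t\mapsto\log(e^t+\delta)$. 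The displayed bound also gives you $D\,\partial f\equiv 0$ directly from constancy of $u_\delta$, without passing to the traced identity (your trace argument is also valid under the stated curvature signs, so this is a simplification rather than a correction).
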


The result here can be viewed as the nonlinear version of the holomorphic splitting theorem proved in \cite{NT1}, which asserts that {\it on a simply-connected $M$ with nonnegative bisectional curvature, any nonconstant linear growth holomorphic function splits off a $\mathbb{C}$,} since a holomorphic function  can be viewed as the holomorphic map into $\mathbb{C}$, and being totally geodesic implies that $\nabla f$ is parallel, hence the splitting. The linear growth of a holomorphic function implies the boundedness of its gradient by \cite{CY}.

A  result with the same conclusion was  proved earlier for harmonic maps from compact quotients of symmetric irreducible spaces into manifolds with nonpositive complex sectional curvature, namely the celebrated  geometric superrigidity  in \cite{MSYe} (see also \cite{Jost-Yau}). But the result here is  different in nature from the result of Mok-Siu-Yeung in the sense that while allowing the domain manifold being noncompact we imposed a curvature condition on the domain instead. The result of \cite{MSYe} concerns with harmonic maps, which is a considerably larger class than holomorphic maps. On the other hand, \cite{MSYe} is restricted for domain manifolds being compact quotients of symmetric spaces, and at the same time  poses a stronger curvature condition  on the target than the result above.

Applying Cheng's gradient estimate for harmonic maps into the Cartan-Hadamard manifold \cite{Cheng} we have the following corollary.

\begin{corollary}\label{coro1} Let $M^m$ and  $N^n$ be  two complete K\"ahler manifolds.  Assume that the bisectional curvature of $M$ is nonnegative, and  $N$ is a Cartan-Hadamard K\"ahler manifold. Then any holomorphic map $f: M\to N $, whose differential $df$ satisfies
$$
\limsup_{r(x)\to \infty} \frac{d^N(p', f(x))}{r^{1+\epsilon} (x)}=0
$$
for any $\epsilon>0$, where $r(x)$ is the distance of $x$ to a fixed point $p\in M$, and $d^N(p', \cdot)$ is the distance function of $N$ to a point $p'\in N$, is totally geodesic.
\end{corollary}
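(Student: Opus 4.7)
The plan is to reduce Corollary \ref{coro1} to Theorem \ref{thm1}(i) by converting the distance-growth hypothesis on $f$ into the required growth bound on $\|df\|$ via Cheng's gradient estimate. A holomorphic map between K\"ahler manifolds is automatically harmonic, so $f \colon M \to N$ is a harmonic map from a K\"ahler manifold with $\Ric_M \ge 0$ into a Cartan--Hadamard target. Moreover, nonpositive sectional curvature on the K\"ahler manifold $N$ implies nonpositive holomorphic bisectional curvature (by a standard Bianchi identity computation), so the curvature hypotheses on $N$ required by Theorem \ref{thm1}(i) are met.

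The second step is to invoke Cheng's gradient estimate: for a harmonic map into a Cartan--Hadamard manifold one has, on any ball $B(x, R) \subset M$,
\[
\|df\|(x) \;\le\; \frac{C}{R}\, \sup_{y \in B(x, R)} d^N\bigl(p', f(y)\bigr),
\]
with a constant $C = C(m)$ independent of $x$ and $R$ (since $\Ric_M \ge 0$ globally). For $r(x)$ large I would take $R = r(x)$; any $y \in B(x, r(x))$ then satisfies $r(y) \le 2 r(x)$, and the hypothesis of the corollary yields
\[
\sup_{y \in B(x, r(x))} d^N\bigl(p', f(y)\bigr) \;=\; o\bigl(r(x)^{1+\e}\bigr)
\]
for every $\e > 0$. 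Dividing by $R = r(x)$ gives $\|df\|(x) = o(r(x)^{\e})$ for every $\e > 0$, which is precisely the hypothesis (\ref{eq:11}) of Theorem \ref{thm1}. Applying Theorem \ref{thm1}(i) then concludes that $f$ is totally geodesic.

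The main obstacle is to confirm that the constant in Cheng's estimate does not deteriorate as $R \to \infty$. This is where the global lower Ricci bound $\Ric_M \ge 0$ and the assumption that the whole of $N$ (not just a fixed regular ball) is Cartan--Hadamard really matter: together they make Cheng's inequality uniform in $R$, so that the rescaling by $R = r(x)$ produces the desired sublinear-plus-$\e$ control on $\|df\|$. Once this uniform scaling is secured, the corollary follows immediately from Theorem \ref{thm1}; everything else is a direct application.
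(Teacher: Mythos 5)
Your proof is correct and is exactly the argument the paper intends: it states the corollary follows by ``applying Cheng's gradient estimate for harmonic maps into the Cartan--Hadamard manifold,'' i.e.\ using $\Ric^M\ge 0$ (from nonnegative bisectional curvature) to get the scale-invariant bound $\|df\|(x)\le C\,R^{-1}\sup_{B(x,R)}d^N(p',f(\cdot))$, choosing $R=r(x)$ to deduce $\|df\|(x)=o(r^{\eps}(x))$, and then invoking Theorem \ref{thm1}(i), the hypotheses of which hold because nonpositive sectional curvature of a K\"ahler manifold implies nonpositive bisectional curvature. No gaps; your handling of the uniform constant and the curvature reduction matches the paper's (sketched) reasoning.
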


The proof utilizes a $\partial\bar{\partial}$-Bochner formula for holomorphic maps, which implies the plurisubharmonicity of $\log (A+\|\partial f\|^2)$ (for any $A>0$).

Various concepts of hyperbolicity arise in conjunction with the Schwarz lemma \cite{Kobayashi-H}. Applying a $\partial\bar{\partial}$-lemma (which is collected in Section 2) on the logarithmic of $k$-dimensional volume   we derive,  in Section 3, results related to {\it the $k$-hyperbolicity of a K\"ahler manifold} in conjunction with the so-called $k$-th scalar curvature. Below we shall recall and define these concepts after proper motivations.

Recall that in \cite{Ni-Zheng2}, $(N^n, h)$ is defined to have negative (positive) $k$-scalar curvature if
$$
S_k(y, \Sigma)=\frac{k(k+1)}{2Vol(\mathbb{S}^{2k-1})}\int_{|Z|=1, Z\in \Sigma} H(Z)\, d\theta(Z)<0\quad (>0)
$$
for any $y\in N$ and any $k$-dimensional subspaces $\Sigma\subset T_y'N$. Here $H$ denotes the holomorphic sectional curvature of $N$. Namely $H(Z)=R^N(Z, \overline{Z}, Z, \overline{Z})$.  We say $S_k(y)<0$ if $S_k(y, \Sigma)<0$ for every $k$-dimensional $\Sigma$. $N$ is called with negative $k$-scalar curvature if $S_k(y)<0$ everywhere. Regarding compact K\"ahler manifolds with $S_k<0$, an interesting question is {\it  when a compact K\"ahler manifold with $S_2<0$ is  projective}?

The celebrated Brody criterion \cite{Ko3, Kobayashi-H} asserts that $N^n$ is Kobayashi hyperbolic if and only if  any holomorphic map from complex plane $\mathbb{C}$ into $N^n$ must be a  constant map.   Motivated by this criterion of the Kobayashi hyperbolicity (which amounts to $1$-hyperbolicity as illustrated below) and  the work of \cite{Eis}, \cite{Kobayashi-H} (see also \cite{Yau-ma} for the extension to the meromorphic mappings and another definition of an intrinsic $k$-measure) one may {\it  define a compact K\"ahler manifold $N^n$ to be $k$-hyperbolic if and only if any holomorphic map $f:\mathbb{C}^k\to N$ must be degenerate (namely the image of $f$ must be of dimension less than $k$)}.  This provides a natural generalization of Kobayashi hyperbolicity (namely $1$-hyperbolicity), and is equivalent to that a similar pseudo norm on $k$-subspaces of $T_x N$ defined via mapping from $\mathbb{D}^k\to N$ being a norm (see Appendix for a similar proof of Brody's theorem).

  At the mean time recall that the classical Schwarz lemma of Yau-Royden \cite{Yau-sch, Roy} for holomorphic maps from Riemann surfaces into compact K\"ahler manifolds with negative holomorphic sectional curvature implies, via the above Brody's criterion, that any compact K\"ahler manifold $N^n$ with negative holomorphic sectional curvature must be $1$-hyperbolic. In view of that $S_k$  defined above coincides with the holomorphic sectional curvature $H$ for $k=1$  it is hence natural to ask the question (Q): {\it  whether or not any holomorphic map from $\mathbb{C}^k$ into a compact $(N^n, h)$ with $S_k<0$ must be degenerate.} Namely {\it whether or not any compact K\"ahler manifold $N^n$ with  $S_k<0$ is $k$-hyperbolic} (in the sense defined above). The following result provides a strong indication of a positive answer to the question (Q), by showing the positive answer if the map is from a compact quotient of $\mathbb{C}^k$ or $(N^n, h)$ has $Ric_k<0$ (see below for definition). Noting that $Ric_k$ also coincides with $H$ for $k=1$, hence part (ii) provides a generalization of the above mentioned consequence of Royden-Yau.

\begin{theorem}\label{thm2} Assume that $\dim_{\mathbb{C}}M=m \le n=\dim_{\mathbb{C}}N$. (i) Let $(M, g)$ be a compact K\"ahler manifold such that $Ric^M \ge 0$. Let $(N^n, h)$ be a complete K\"ahler manifold such that $S^N_m(y)<0$. Then any holomorphic map $f:M\to N$  must be degenerate. The same result holds if $Ric^M>0$ and $S_m^N\le 0$.

(ii)  Let  $(M^m, g)$ be  noncompact complete K\"ahler manifold with nonnegative scalar curvature and $Ric^M$ is bounded from below. Assume that $(N^n, h)$  has the $m$-Ricci curvature $Ric^N_m \le -\kappa < 0$ (which holds if $Ric^N_m<0$ and $N$ is compact). Then any holomorphic map $f:M\to N$  must be degenerate. In particular, $(N^n, h)$ is $k$-hyperbolic if $Ric_k^N<0$.

(iii) Let  $(M^m, g)$ be  noncompact complete K\"ahler manifold with $Ric^M \ge 0$. Assume that $(N^n, h)$ is noncompact and  has the $m$-Ricci curvature $Ric^N_m <0$, and $f:M\to N$ is a holomorphic map. If $D(x)=\frac{(f^* \omega_h)^m(x)}{\omega_g^m(x)}$ (with $\omega_g$ and $\omega_h$ being the K\"ahler forms of $M$ and $N$) satisfies that
$$
\limsup_{x\to \infty} \frac{D(x)}{r^{\epsilon}(x)}=0
$$
for any $\epsilon >0$, then $f$ must be degenerate.
\end{theorem}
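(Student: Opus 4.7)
The strategy is to argue by contradiction via a Ni--Tam plurisubharmonic Liouville theorem. Suppose $f$ is non-degenerate, so $D \not\equiv 0$, and for every $A>0$ set $u_A := \log(D+A)$. The plan is to establish that $u_A$ is plurisubharmonic on $M$; the growth hypothesis $D(x)=o(r^{\epsilon}(x))$ for every $\epsilon>0$, combined with the Liouville theorem, then forces $u_A$ (and hence $D$) to be constant, and the strict negativity of $\Ric^{N}_{m}$ will rule out a positive constant, leaving $D\equiv 0$.

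For the plurisubharmonicity, apply the $\partial\bar{\partial}$-lemma of Section~2 to $\log D$. Schematically, on the open set $\{D>0\}$ where $f$ has maximal rank, a Gauss-equation analysis for the image submanifold yields
\begin{equation*}
i\partial\bar{\partial}\log D \;=\; \rho_g \;-\; \rho_{f^{*}\omega_h},
\end{equation*}
where $\rho_g$ is the Ricci form of $(M,g)$ and $\rho_{f^{*}\omega_h}$ is the Ricci form of the pulled-back metric, which by the Gauss equation is bounded above by the restriction of $\Ric^{N}_m$ to $\Sigma_x := df(T'_x M)$. Under $\Ric^{M}\ge 0$ and $\Ric^{N}_{m}<0$ one has $\rho_g\ge 0$ and $\rho_{f^{*}\omega_h}<0$ strictly on $\{D>0\}$, so $i\partial\bar{\partial}\log D>0$ there. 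The identity
\begin{equation*}
i\partial\bar{\partial}\log(D+A) \;=\; \frac{D}{D+A}\, i\partial\bar{\partial}\log D \;+\; \frac{A}{D(D+A)^{2}}\, i\partial D \wedge \bar{\partial} D,
\end{equation*}
valid on $\{D>0\}$, propagates the nonnegativity to $u_A$ there (both summands are nonnegative $(1,1)$-forms). At zeros of $D$, the minimum principle yields $\partial D=0$ and $i\partial\bar{\partial} D\ge 0$, so $i\partial\bar{\partial} u_A = i\partial\bar{\partial} D/A\ge 0$. Hence $u_A$ is smooth and plurisubharmonic on all of $M$.

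The growth hypothesis gives $u_A(x) \le \log(\delta\, r^{\epsilon}(x)+A) \le \epsilon\log r(x)+O(1)$ eventually for every $\epsilon,\delta>0$, so together with the lower bound $u_A \ge \log A$ one concludes $u_A = o(\log r)$. Applying the Liouville theorem of \cite{NT1} (in the generalized form of \cite{Liu, Ni-Niu2} valid under $\Ric^{M}\ge 0$) forces $u_A$, and hence $D$, to be a constant $c\ge 0$. If $c>0$, then $i\partial\bar{\partial}\log D \equiv 0$ on $M$, contradicting the strict inequality $i\partial\bar{\partial}\log D>0$ at any maximal-rank point established above. Therefore $c=0$, i.e., $D\equiv 0$, and $f$ is degenerate.

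The main technical obstacle is the $\partial\bar{\partial}$-lemma itself: deriving a full $(1,1)$-form identity (or inequality $\ge$ when $m<n$, absorbing the second-fundamental-form terms of the image submanifold) whose sign is controlled by the $m$-Ricci of $N$ along $\Sigma_x$ alone. A secondary subtlety is the invocation of the Ni--Tam Liouville theorem under the weaker curvature assumption $\Ric^{M}\ge 0$, which requires the extensions \cite{Liu, Ni-Niu2} rather than the original nonnegative bisectional curvature version.
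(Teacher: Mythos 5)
Your proposal addresses only part (iii) of the theorem; parts (i) and (ii) are never touched. In the paper, part (i) is a compact maximum-principle argument: assuming $f$ non-degenerate, at a maximum point of $D$ one applies the pointwise elliptic operator $\mathcal{L}=\sum_{\gamma}\frac{1}{2|\lambda_\gamma|^2}\left(\nabla_\gamma\nabla_{\bar\gamma}+\nabla_{\bar\gamma}\nabla_\gamma\right)$ to $\log D$, and the Bochner formula of Lemma \ref{lemma1-2} produces exactly the $m$-scalar curvature $S_m^N$ of the image $m$-plane $\mathrm{Span}\{\partial f(\partial/\partial z^\gamma)\}$, whose assumed negativity contradicts $\mathcal{L}\log D\le 0$ at the maximum. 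Part (ii) follows from the Schwarz-type estimate $D\le (K/(m\kappa))^m$ of Corollary \ref{Coro-31}, obtained from $\Delta\log D\ge m\kappa D^{1/m}-K$ via the Cheng--Yau cut-off/maximum-principle technique (this is where the lower Ricci bound on $M$ enters), applied with $K=0$. Neither argument, nor any substitute for them, appears in your write-up, so the statement as a whole is not proved.

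Even for part (iii), the decisive step fails as written: you invoke a Liouville theorem (constancy of plurisubharmonic functions of $o(\log r)$ growth) under merely $\Ric^M\ge 0$, but no such theorem is known --- the paper explicitly states this is open, and the extensions you cite cover nonnegative holomorphic sectional curvature (\cite{Liu}) and nonnegative orthogonal bisectional curvature (\cite{Ni-Niu2}), neither of which is implied by $\Ric^M\ge 0$. The paper's actual proof avoids constancy altogether: by the result of \cite{Ni-JDG}, under $\Ric^M\ge 0$ a plurisubharmonic $u$ with $u=o(\log r)$ satisfies $\left(\sqrt{-1}\partial\bar\partial u\right)^m\equiv 0$; taking $u=\log D$, Lemma \ref{lemma1-2} shows $u$ is strictly plurisubharmonic wherever $D>0$ (there $\partial f$ is injective and $\Ric^N_m<0$ makes the curvature term strictly positive), so $\left(\sqrt{-1}\partial\bar\partial u\right)^m>0$ at any such point --- a contradiction forcing $D\equiv 0$. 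Your regularization $\log(D+A)$, the plurisubharmonicity check at zeros of $D$, and the growth bookkeeping are all reasonable (and consistent with the paper's use of $\log D$ as a $[-\infty,\infty)$-valued psh function), but the conclusion currently rests on a Liouville statement that is not available; replacing it by the Monge--Amp\`ere vanishing of \cite{Ni-JDG} is the missing idea.
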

 We define $Ric(x, \Sigma)$ as the Ricci curvature of the curvature tensor restricted to the $k$-dimensional subspace $\Sigma\subset T_x'M$. Namely $Ric(x, \Sigma)(v,\bar{v})=\sum_{i=1}^k R(E_i,\overline{E}_i, v,\bar{v})$ with $\{E_i\}$ being a unitary basis of $\Sigma$.  We say that $Ric_k(x)<0$ if $Ric(x, \Sigma)<0$  for every k-dimensional subspace $\Sigma$. Clearly $Ric_k(x)<0$ implies that $S_k(x)<0$, and  it coincides with $H$ when $k=1$, with $Ric$ when $k=n$. Note that $Ric_k$ is independent for different $k$, unlike $\{S_k\}$ which is decreasing in $k$.   For  $k=1$, the answer has been known to be positive for both $Ric_k>0$ and $RIc_k<0$ (cf. \cite{Wu-Yau}, \cite{Yang}, \cite{Tosatti-Yang}). The result of \cite{Ni-Zheng2} shows that for $k=2$, $Ric_k>0$ does imply the projectivity.

 We should remark that even for the case of the equal-dimension (namely  $n=m$), the result of part (i) of above theorem  seems new (at least the author is not aware of any such a statement in the literatures).  Note that part (i) can be applied to $m$-dimensional tori. Hence if the map could be lift to one from $\mathbb{C}^m$, it would provide a positive answer to the question (Q). Part (ii) is known for equal dimensional case \cite{Ko3}. The part (ii) of above theorem in particular implies that {\it compact K\"ahler manifold $(N^n, h)$ with $Ric^N_k<0$} is $k$-hyperbolic.

For part (iii) of Theorem \ref{thm2}, clearly the negativity of $Ric^N_m$ is needed, since  there are non-degenerate linear maps with bounded $D$ between complex Euclidean spaces.
In general it is still unknown whether or not  the Liouville theorem for the plurisubharmonic functions  holds on a K\"ahler manifold $M^m$ with nonnegative Ricci curvature. Note that   in \cite{Liu} the Liouville theorem for plurisubharmonic functions was proved for K\"ahler manifolds with nonnegative holomorphic sectional curvature,  and in \cite{Ni-Niu2} it was proved for K\"ahler manifolds with nonnegative orthogonal bisectional curvature. On the other hand  under the nonnegativity of Ricci curvature there is a partial result proved in \cite{Ni-JDG} which asserts that {\it  for any plurisubharmonic function $u(x)$ with $o(\log (r(x)))$ growth,  $(\sqrt{-1}\partial \bar{\partial} u)^m\equiv 0$} holds. The part (iii) of the above theorem uses this statement.

In Section 4 we also prove some extension of above result which in particular implies that $f:\mathbb{C}^m\to N$ is asymptotically degenerate if $S_m^N<0$, and it is degenerate  if $\sigma_{m-1}(\partial f)$  is of $o(r^2(x))$. Here $\sigma_{m-1}(\partial f)$ is the $(m-1)$-th symmetric function of the singular values of $\partial f: T_x'M \to T'_{f(x)}N$.

There are many generalizations of the classical Schwarz Lemma on holomorphic maps between unit balls via the work of  Ahlfors, Royden, Lu, Yau, Chen-Cheng-Look, Mok-Yau, etc (see \cite{Kobayashi-H} and \cite{Zheng-B} and references therein). In Section 5 we prove a new version which only involves the holomorphic sectional curvature of domain and target manifolds. Hence it is perhaps the most natural high dimensional generalization of the classical result of Ahlfors. To state the result we introduce the following: For the tangent map $\partial f: T_x'M \to T'_{f(x)}N$ we define its maximum norm square to be
$$
\|\partial f\|^2_m\doteqdot \sup_{v\ne 0}\frac{|\partial f(v)|^2}{|v|^2}.
$$

\begin{theorem}\label{thm:sch} Let $(M, g)$ be a complete K\"ahler manifold such that the holomorphic sectional curvature $H^M(X)/|X|^4 \ge -K$ for some $K\ge0$. Let $(N^n, h)$ be a  K\"ahler manifold such that $H^N(Y)<-\kappa |Y|^4$ for some $\kappa>0$.  Let $f:M\to N$ be  a holomorphic map. Then
\begin{equation}\label{eq:15}
\|\partial f\|^2_m \le \frac{K}{\kappa},
\end{equation}
provided that the bisectional curvature of $M$ is bounded from below. In particular, if $K=0$, any holomorphic map $f: M\to N$ must be a constant map.
\end{theorem}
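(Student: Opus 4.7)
The plan is to apply the Omori-Yau maximum principle in its Hessian (Omori) form—available since the assumed lower bound on the bisectional curvature of $M$ forces the real sectional curvature to be bounded from below—to a suitable regularization of $\log \phi$, where $\phi := \|\partial f\|_m^2$ is the largest eigenvalue of the Hermitian form $u := f^*h$ with respect to $g$. At a near-maximizer I will carry out a $\partial\bar\partial$-Bochner computation evaluated only in the direction of the top eigenvector of $u$, rather than taking a trace. The decisive point, distinguishing this argument from the classical trace-based Chern-Lu-Yau computation, is that restricting the complex Hessian to the single direction $(V, \bar V)$ causes the bisectional and Ricci contributions to collapse into pure holomorphic sectional curvatures, matching precisely the hypotheses.

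\textbf{Upper barrier and Bochner identity.} For each near-maximizer $x_k$, let $v_k$ be a unit eigenvector of $u$ relative to $g$ realizing $\phi(x_k)$, and extend $v_k$ to a local holomorphic vector field $V$ near $x_k$ by declaring the components $V^\alpha$ to be constant in a set of holomorphic normal coordinates for $g$ at $x_k$; this gives $V(x_k) = v_k$, $|V(x_k)|_g = 1$, and $\nabla V(x_k) = 0$. Set $\psi(x) := |\partial f(V(x))|^2_h / |V(x)|^2_g$; then $\psi \le \phi$ locally with equality at $x_k$, so $\psi$ is a smooth upper-barrier for $\phi$ at $x_k$. Write $W := \partial f(V)$, a holomorphic section of $f^*T'N$. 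The Poincar\'e-Lelong type identity
\begin{equation*}
\partial\bar\partial \log |s|_h^2 \ge -\frac{\langle \Theta\, s,\, s\rangle_h}{|s|^2_h}
\end{equation*}
for holomorphic sections of Hermitian vector bundles (with equality at a point where the $(1,0)$ part of $\nabla s$ vanishes) is applied to $W$ (inequality) and to $V$ (equality at $x_k$, since $\nabla V(x_k)=0$ kills the Cauchy-Schwarz defect). Evaluated at the $(V, \bar V)$ slot, the key algebraic simplification is that $\partial f(V) = W$ forces
\begin{equation*}
\langle \Theta_{f^*T'N} W, W\rangle(V, \bar V) = R^N(W, \bar W, W, \bar W) = H^N(W), \qquad \langle \Theta_{T'M} V, V\rangle(V, \bar V) = H^M(V).
\end{equation*}

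\textbf{Estimate and conclusion.} Subtracting the two expressions and using the curvature hypotheses together with the normalizations $|V|^2_g(x_k) = 1$ and $|W|^2_h(x_k) = \phi(x_k)$,
\begin{equation*}
(\partial\bar\partial \log \psi)(V, \bar V)(x_k) \ge -\frac{H^N(W)}{|W|^2} + H^M(V) \ge \kappa\, \phi(x_k) - K.
\end{equation*}
On the other hand, the Omori-Yau Hessian bound applied to $\log \phi$ transfers to the smooth upper-barrier $\log \psi$ and yields $(\partial\bar\partial \log \psi)(V, \bar V)(x_k) \le \epsilon_k$ with $\epsilon_k \to 0$. Combining these and letting $k \to \infty$ gives $\sup_M \phi \le K/\kappa$; the case $K=0$ forces $\partial f \equiv 0$, so $f$ is constant. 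The main obstacle is recognizing and implementing this "one-direction" Bochner trick: evaluating the complex Hessian along the very direction $V$ whose pushforward is being measured is what makes the usual bisectional/Ricci terms in the Bochner formula collapse to $H^M(V)$ and $H^N(W)$. The bisectional lower bound on $M$ plays only the auxiliary role of enabling the Hessian form of Omori-Yau and does not enter the final estimate.
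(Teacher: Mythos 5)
Your core idea --- touching $\phi=\|\partial f\|^2_m$ from below at a (near-)maximum by the smooth function $\psi=|\partial f(V)|^2_h/|V|^2_g$ built from a holomorphic field $V$ with constant coefficients in normal coordinates, and then evaluating the $\partial\bar\partial$-Bochner identity only in the slot $(V,\bar V)$ so that the curvature terms collapse to $H^M(V)$ and $H^N(\partial f(V))$ --- is exactly the mechanism of the paper's proof (Lemmas \ref{lemma:51} and \ref{lemma:52}), and your algebraic identities are correct. The gap is in how you globalize.

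First, your justification for invoking the Hessian (Omori) form of the maximum principle is not valid: a lower bound on the holomorphic bisectional curvature of $M$ does \emph{not} force a lower bound on the real sectional curvature. The real sectional curvature of a totally real $2$-plane involves the polarization component $\operatorname{Re} R_{Z\bar W Z\bar W}$, which is not controlled by one-sided bisectional bounds; by scaling, any implication of the form ``bisectional $\ge -K$ $\Rightarrow$ sectional $\ge -c(m)K$'' would force ``bisectional $\ge 0$ $\Rightarrow$ sectional $\ge 0$'', which fails. So under the theorem's actual hypothesis the sectional-curvature-based Omori principle is not available, and replacing the hypothesis by a sectional bound would weaken the statement. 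The paper avoids this by never needing a full Hessian comparison: it localizes with an explicit cutoff $\varphi$ on $B_p(R)$, and the only geometric input is the complex Hessian comparison of Li--Wang for $r(x)$ in the single direction used, which requires precisely the bisectional lower bound. Second, Omori--Yau applies to $C^2$ functions, while $\log\phi$ is only continuous; at the approximate maxima $x_k$ that Omori--Yau would produce, the bound $(\partial\bar\partial\log\psi)(V,\bar V)(x_k)\le\epsilon_k$ does not follow merely from $\psi\le\phi$ with equality at $x_k$ and $\phi(x_k)\ge\sup\phi-\delta_k$ (that transfer is legitimate only at an honest local maximum, or with a viscosity-type statement furnishing smooth test functions from \emph{above}, which you would then have to prove under the bisectional hypothesis; note also that your $\psi$ is a lower barrier, not an ``upper-barrier''). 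Both defects are repaired simultaneously by the paper's route: maximize $\phi\varphi$ on a ball to get a genuine interior maximum $x_0$, insert the smooth barrier there, and absorb the cutoff's contribution via the Li--Wang comparison before letting $R\to\infty$.
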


The proof  uses   a viscosity consideration from PDE theory (cf. \cite{Ni-Wang} for another such application concerning the isoperimetric inequalities in Riemannian geometry). It is also reminiscent of Pogorelov's Lemma (cf. Lemma 4.1.1 of \cite{Gu}) for Monge-Amp\`ere equation, since the maximum eigenvalue of $\nabla^2 u$ is the $\|\cdot\|_m$ for the normal map $\nabla u$ for any smooth  $u$.

 A consequence of Theorem \ref{thm:sch} asserts that {\it the equivalence of the negativities of the holomorphic sectional curvature implies the equivalence of the metrics}. Namely if $M^m$ admits two K\"ahler metrics $g_1$ and $g_2$ satisfying that
$$
-L_1|X|_{g_1}^4\le H_{g_1}(X)\le -U_1|X|_{g_1}^4, \quad -L_2|X|_{g_2}^4 \le H_{g_2}(X)\le -U_2|X|_{g_2}^4
$$
then for any $v\in T_x'M$ we have the estimates:
$$
|v|^2_{g_2}\le \frac{L_1}{U_2}|v|^2_{g_1};\quad |v|^2_{g_1}\le  \frac{L_2}{U_1}|v|^2_{g_2}.
$$
Note that in this case the bisectional curvature lower bound can be easily checked via the  polarization formula, and the result can be stated locally given that the global result is derived from a local estimate. In view of the ample applications of classical Schwarz Lemma we expect further implications of Theorem \ref{thm:sch}.

There are estimates, sometimes even local estimates, associated with Theorems \ref{thm1}, \ref{thm2}, \ref{thm:sch}, \ref{thm41}. These are collected in Corollaries \ref{coro:31},  \ref{Coro-31}, \ref{Coro:42} and estimates (\ref{eq:42}), (\ref{eq:44}), (\ref{eq:53}). These estimates can be conveniently applied to local settings.

We also obtained three statements (namely Corollaries \ref{coro-positive-2} and  \ref{coro-hoop2}) asserting the amount of ``energy" (in terms of the curvature ratio) is needed to have non-degenerate or nonconstant holomorphic maps between two K\"ahler manifolds with curvature constrains. These are in some sense dual versions of the Schwarz Lemma for positively curved manifolds.

\section{$\partial\bar{\partial}$-Bochner formulae for holomorphic mappings}

Let $f: M^m\to N^n$ be a holomorphic map between K\"ahler manifolds. Choose holomorphic normal coordinate $(z_1, z_2, \cdots, z_m)$ near a point $p$ on the domain manifold $M$,  correspondingly  $(w_1, w_2, \cdots, w_n)$ near $f(p)$ in the target. Let $\omega_g=\sqrt{-1}g_{a\bar{\beta}}dz^\alpha\wedge d\bar{z}^{\beta}$ and $\omega_h=\sqrt{-1}h_{i\bar{j}}dw^i\wedge d\bar{w}^{j}$ be the K\"ahler forms  of $M$ and $N$ respectively. Correspondingly, the Christoffel symbols are given
$$
^M\Gamma_{\alpha \gamma}^\beta =\frac{\partial g_{\alpha \bar{\delta}}}{\partial z^{\gamma}}g^{\bar{\delta}\beta}=\Gamma_{\gamma \alpha }^\beta; \quad \quad  ^N\Gamma_{i k}^j =
\frac{\partial h_{i \bar{l}}}{\partial w^{k}}h^{\bar{l}k}=\Gamma_{k i }^j.
$$
We always uses Einstein's convention when there is an repeated index. The symmetry in the Christoffel symbols is due to K\"ahlerity. If the appearance of the indices can distinguish the manifolds we omit the superscripts $^M$ and $^N$.  Correspondingly the curvatures are given by
$$
^MR^\beta_{\alpha \bar{\delta} \gamma}=-\frac{\partial}{\partial \bar{z}^{\delta}} \Gamma_{\alpha \gamma}^\beta; \quad \quad \quad  \,^NR^j_{i \bar{l} k}=-\frac{\partial}{\partial \bar{w}^{l}} \Gamma_{i k}^j.
$$
At the points $p$ and $f(p)$, where the normal coordinates are centered we have that
$$
R_{\bar{\beta}\alpha \bar{\delta} \gamma}=-\frac{\partial^2 g_{\bar{\beta}\alpha}}{\partial z^{\gamma}\partial \bar{z}^{\delta}}; \quad \quad R_{\bar{j}i \bar{l} k}=-\frac{\partial^2 h_{\bar{j}i}}{\partial w^{k}\partial \bar{w}^{l}}.
$$
For a smooth map $df(\frac{\partial\, \,  }{\partial z^\alpha})$ can be written as $\frac{\partial w^i}{\partial z^\alpha} \frac{\partial}{\partial w^i}+\frac{\partial \bar{w}^i}{\partial z^\alpha} \frac{\partial }{\partial \bar{w}^i}$. But for holomorphic map $df(\frac{\partial\, \, }{\partial z^\alpha})=\partial f(\frac{\partial\, \, }{\partial z^\alpha})=\frac{\partial w^i}{\partial z^\alpha} \frac{\partial\quad }{\partial w^i}$, which we also write as $\frac{\partial f^i}{\partial z^\alpha} \frac{\partial\quad }{\partial w^i}$ or $f^i_\alpha \frac{\partial\quad }{\partial w^i}$. Similarly $df(\frac{\partial\, \, }{\partial \bar{z}^\alpha})=\bar{\partial} f(\frac{\partial\, \, }{\partial \bar{z}^\alpha})$. Recall the Hessian of the map is $Ddf(X, Y)=D_Y (df(X))-df(\nabla_Y X)$. For holomorphic map $f$ the information is all in $Ddf(\frac{\partial}{\partial z^\alpha}, \frac{\partial}{\partial z^\beta})=\sum f^i_{\alpha, \beta}dz^\alpha \otimes dz^\beta \otimes \frac{\partial}{\partial w^i}$. Sometimes we also denote as  $D_{X}d_Y f$. In local coordinates
$$
f^i_{\alpha, \beta}=\frac{\partial^2 f^i}{\partial z^\alpha \partial z^\beta}-\Gamma^{\gamma}_{\alpha \beta}\frac{\partial f^i}{\partial z^\gamma}+\Gamma^i_{jk}\frac{\partial f^k}{\partial z^\alpha}\frac{\partial f^l}{\partial z^\beta}.
$$

\begin{lemma}\label{lemma1-1} For a holomorphic map $f: M \to N$,
\begin{eqnarray}\label{Boch1}
\langle \sqrt{-1}\partial \bar{\partial} \|\partial f\|^2, \frac{1}{\sqrt{-1}}v\wedge \bar{v}\rangle &=&\|D_v \partial_{(\cdot)} f\|^2 -\sum_{\alpha, \beta =1}^m g^{\alpha\bar{\beta}}  R^N(\partial f_\alpha, \bar{\partial}f_{\bar{\beta}}, \partial f(v), \bar{\partial} f(\bar{v}))  \nonumber\\
&\quad&  + \sum_{\alpha\beta}g^{\alpha \bar{\beta}}\langle \partial f(R^M_{v\bar{v}})_\alpha,  \bar{\partial}f_{\bar{\beta}}\rangle.
\end{eqnarray}
Here $\partial f_\alpha=\partial f(\frac{\partial\, \, }{\partial z^\alpha}), \bar{\partial}f_{\bar{\beta}}=\bar{\partial} f(\frac{\partial\, \, }{\partial \bar{z}^\beta})$, $R_{v\bar{v}}$ is viewed as transformation $T'M\to T'M$ defined as $R_{v\bar{v}}(\frac{\partial\, }{\partial z^\alpha})=R_{v\bar{v}\alpha\bar{\beta}}g^{\gamma \bar{\beta}}\frac{\partial}{\partial z^\gamma}$, and $\partial f(R^M_{v\bar{v}})_\alpha=\partial f(R^M_{v\bar{v}}(\frac{\partial\,\,}{\partial z^\alpha}))$.
\end{lemma}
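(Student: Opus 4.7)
The plan is to work at an arbitrary point $p\in M$ in holomorphic normal coordinates $(z^\alpha)$ around $p$ and $(w^i)$ around $f(p)$, so that $g_{\alpha\bar\beta}(p)=\delta_{\alpha\beta}$, $h_{i\bar j}(f(p))=\delta_{ij}$, all first derivatives of $g$ and $h$ vanish at the respective centers, and
\[
\partial_\gamma\bar\partial_{\bar\delta} g_{\alpha\bar\beta}(p)=-R^M_{\bar\beta\alpha\bar\delta\gamma},\qquad \partial_k\bar\partial_{\bar l}h_{i\bar j}(f(p))=-R^N_{\bar j i\bar l k},
\]
exactly as in the preamble of Section 2. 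Writing
\[
\|\partial f\|^2(z)=g^{\alpha\bar\beta}(z)\,h_{i\bar j}(f(z),\overline{f(z)})\,f^i_\alpha(z)\,\overline{f^j_\beta(z)}\;=\;G\cdot H\cdot F\cdot\bar F,
\]
I would apply $\partial_v\bar\partial_{\bar v}$ to this four-fold product and then specialize to $z=p$.

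The Leibniz expansion collapses dramatically at the center. Holomorphicity of $f$ kills $\bar\partial_{\bar v}F$, $\partial_v\bar F$, and $\partial_v\bar\partial_{\bar v}F$; the normal-coordinate conditions kill $\partial_v G(p)$ and $\bar\partial_{\bar v}G(p)$; and the chain rule together with $\partial_k h_{i\bar j}(f(p))=0$ kills $\partial_v H(p)$ and $\bar\partial_{\bar v}H(p)$. A quick term-by-term enumeration then shows that exactly three survive:
\[
\partial_v\bar\partial_{\bar v}\|\partial f\|^2(p)=\bigl[(\partial_v\bar\partial_{\bar v}G)\,HF\bar F+G(\partial_v\bar\partial_{\bar v}H)\,F\bar F+GH(\partial_v F)(\bar\partial_{\bar v}\bar F)\bigr]_{z=p}.
\]

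Each surviving piece now matches a term of the claimed formula. For the first, differentiating $g^{\alpha\bar\gamma}g_{\delta\bar\gamma}=\delta^\alpha_\delta$ twice at $p$ and using the vanishing of the first derivatives of $g$ gives $\partial_v\bar\partial_{\bar v}g^{\alpha\bar\beta}(p)$ as a contraction of $R^M_{v\bar v}$; once paired with $\delta_{ij}f^i_\alpha\overline{f^j_\beta}=\langle\partial f_\alpha,\bar\partial f_{\bar\beta}\rangle$ this reproduces the $M$-curvature term $\sum g^{\alpha\bar\beta}\langle\partial f(R^M_{v\bar v})_\alpha,\bar\partial f_{\bar\beta}\rangle$. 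For the second, applying the chain rule twice to $H=h_{i\bar j}\circ f$, and using holomorphicity of $f$ to discard the pure $\partial^2$ and $\bar\partial^2$ pieces of $h$, gives
\[
\partial_v\bar\partial_{\bar v}H(p)=-R^N_{\bar j i\bar l k}\,\partial f(v)^k\,\overline{\partial f(v)^l},
\]
which after contracting against $\delta^{\alpha\beta}f^i_\alpha\overline{f^j_\beta}$ delivers $-\sum g^{\alpha\bar\beta}R^N(\partial f_\alpha,\bar\partial f_{\bar\beta},\partial f(v),\bar\partial f(\bar v))$. For the third, the vanishing of both Christoffels at $p$ and $f(p)$ identifies the ordinary derivative $\partial_v f^i_\alpha(p)$ with the intrinsic Hessian $f^i_{\alpha,v}(p)$, so $\sum_{\alpha,i}|\partial_v f^i_\alpha|^2=\|D_v\partial_{(\cdot)}f\|^2$.

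The only genuinely subtle step is the chain-rule calculation for $H=h_{i\bar j}\circ f$: one must verify that $\bar\partial_{\bar\delta}\partial_\gamma H(p)$ picks up only the mixed derivative $\partial_k\bar\partial_{\bar l}h_{i\bar j}$ of the target metric, which is precisely what forces the $N$-curvature to contract against the images $\partial f(v),\bar\partial f(\bar v)\in T'_{f(p)}N$ rather than against $v,\bar v\in T'_pM$, and what produces the correct minus sign. Once this is settled, the identity holds at the arbitrary point $p$ for arbitrary $v$, and since both sides are manifestly tensorial this upgrades to the stated global Bochner formula.
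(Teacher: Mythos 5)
Your proof is correct and is precisely the direct normal-coordinate computation that the paper itself cites as the proof of Lemma \ref{lemma1-1} (the paper merely asserts this and then sketches an equivalent alternative via the Kodaira--Bochner formula for the closed $(1,1)$-form $f^*\omega_h$, using that $\partial\bar\partial\Lambda\eta=\sqrt{-1}\Delta_{\bar\partial}\eta$ when $d\eta=0$). Your Leibniz bookkeeping, the identification of the three surviving terms, the chain-rule step isolating the mixed derivative $\partial_k\bar\partial_{\bar l}h_{i\bar j}$ contracted against $\partial f(v)$, and the identification of $\partial_v f^i_\alpha$ with the intrinsic Hessian at the centers all check out, so no gap remains.
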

\begin{proof} The proof is via direct computations by choosing normal coordinates centered at $p$ and $f(p)$. We can also derive this from the classical Kodaira-Bochner formula for $(1, 1)$-forms. (See for example \cite{MK} as well as Lemma 2.1 of \cite{NN}.) This is based on the following observation: Let $\eta$ denote the $(1,1)$-form  $f^* \omega_h$ with $\omega_h$ being the K\"ahler form of $N$. Then $\|\partial f\|^2$ is nothing but $\Lambda \eta$ (following the notation of \cite{NN} with $\Lambda$ being the contraction using the K\"ahler metric $\omega_g$). Hence the left hand side of the formula (\ref{Boch1}) amounts to computing $\partial \bar{\partial} \Lambda \eta$. On the other hand Lemma 2.1 of \cite{NT2} asserts that it equals to $\sqrt{-1}\Delta_{\bar{\partial}} \eta$ since $d\eta=0$. Now the Kodaira-Bochner formula (cf. Lemma 2.1 of \cite{NN}) can be applied to obtain the right hand side, since the Kodaira-Bochner formula expresses $\Delta_{\bar{\partial}} \eta$ in terms of curvature of $M$ together with  $\frac{1}{2}\left(\nabla_{\gamma}\nabla_{\bar{\gamma}}+ \nabla_{\bar{\gamma}}\nabla_\gamma\right) \eta$. Note that the first two terms in the right hand side of (\ref{Boch1}) comes from $\frac{1}{2}\left(\nabla_{\gamma}\nabla_{\bar{\gamma}}+ \nabla_{\bar{\gamma}}\nabla_\gamma\right) \eta$. There are also cancelations for terms involving $Ric^M$.
\end{proof}

\begin{corollary} \label{coro-to-lm1}(a) Let $M^m$ and  $N^n$ be  two complete K\"ahler manifolds.  Assume that the bisectional curvature of $M$ is nonnegative and the bisectional curvature of $N$ is non-positive. Let $f:M \to N$ be a holomorphic map. Then $\log (1+\|df\|^2)$ is plurisubharmonic. Moreover, if $\log (1+\|df\|^2)$ is pluriharmonic, then $f$ is totally geodesic.

(b) If $M^m$ is a Riemann surface with nonnegative curvature (whose universal cover is $\mathbb{C}$), the same result holds if $N$ has nonpositive holomorphic sectional curvature.
\end{corollary}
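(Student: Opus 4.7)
The plan is to apply the standard identity
\[
\sqrt{-1}\,\partial\bar\partial\log u \;=\; \frac{\sqrt{-1}\,\partial\bar\partial u}{u} - \frac{\sqrt{-1}\,\partial u\wedge\bar\partial u}{u^{2}}
\]
with $u:=1+\|\partial f\|^{2}$. Pairing with $\frac{1}{\sqrt{-1}}\,v\wedge\bar v$ and substituting Lemma \ref{lemma1-1}, the first term on the right contributes $u^{-1}\|D_{v}\partial_{(\cdot)}f\|^{2}$ together with the two curvature expressions in (\ref{Boch1}), while the second term is a gradient correction which I must control by Cauchy--Schwarz.

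The key observation is that, in holomorphic normal coordinates at the point, $\partial_{v}u = g^{\alpha\bar\beta}h_{i\bar j}(D_{v}f^{i}_{\alpha})\bar f^{j}_{\beta}$, so Cauchy--Schwarz with respect to the bilinear pairing induced by $g$ and $h$ gives
\[
|\partial_{v}u|^{2} \;\le\; \|D_{v}\partial_{(\cdot)}f\|^{2}\,\|\partial f\|^{2} \;\le\; \|D_{v}\partial_{(\cdot)}f\|^{2}\,u .
\]
Consequently $u^{-1}\|D_{v}\partial_{(\cdot)}f\|^{2}-u^{-2}|\partial_{v}u|^{2}\ge \|D_{v}\partial_{(\cdot)}f\|^{2}/u^{2}\ge 0$: the $+1$ shift inside the logarithm exactly absorbs the Cauchy--Schwarz deficit and leaves a nonnegative residual. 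Adding the curvature terms yields
\[
\langle \sqrt{-1}\,\partial\bar\partial\log u,\tfrac{1}{\sqrt{-1}}v\wedge\bar v\rangle \;\ge\; \frac{\|D_{v}\partial_{(\cdot)}f\|^{2}}{u^{2}}+\frac{\mathcal{R}(v,\bar v)}{u},
\]
where $\mathcal{R}(v,\bar v)$ collects the two curvature contributions from (\ref{Boch1}). Under the hypotheses of (a) each of these is nonnegative: $R^{N}\le 0$ on bisectional curvature makes the $N$-term (with its preceding minus sign) nonnegative, and the Hermitian operator $R^{M}_{v\bar v}$ on $T'M$ is positive semi-definite when the bisectional curvature of $M$ is nonnegative, so its trace against the positive Hermitian tensor built from $\partial f$ is nonnegative. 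Hence $\log u$ is plurisubharmonic.

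If $\log u$ is pluriharmonic, each of the three nonnegative summands must vanish pointwise; in particular $D_{v}\partial_{\alpha}f\equiv 0$ for every $v,\alpha$, which is exactly $D\partial f\equiv 0$. Since $f$ is holomorphic, $D\bar\partial f$ is the complex conjugate and also vanishes, so $Ddf\equiv 0$ and $f$ is totally geodesic. For part (b) the dimension $m=1$ collapses the $R^{N}$-sum to a positive multiple of $R^{N}(\partial f(v),\overline{\partial f(v)},\partial f(v),\overline{\partial f(v)}) = H^{N}(\partial f(v))$, so the weaker hypothesis $H^{N}\le 0$ suffices; the curvature condition on $M$ automatically reduces to nonnegative holomorphic sectional curvature for a Riemann surface, and the rest of the argument is identical. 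The genuine obstacle is precisely the Cauchy--Schwarz step: the naive choice $u=\|\partial f\|^{2}$ would cancel the $\|D_{v}\partial_{(\cdot)}f\|^{2}$ contribution exactly and leave nothing with which to squeeze total geodesicity out of pluriharmonicity, so the whole argument pivots on the $+1$ trick.
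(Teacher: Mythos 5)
Your proposal is correct and follows essentially the same route as the paper: the same $\partial\bar\partial\log$ decomposition applied to $u=1+\|\partial f\|^2$, Lemma \ref{lemma1-1} for the curvature terms, and the Cauchy--Schwarz inequality $|\partial_v u|^2\le \|D_v\partial_{(\cdot)}f\|^2\,\|\partial f\|^2$, so that the $+1$ shift leaves the residual $\|D_v\partial_{(\cdot)}f\|^2/(1+\|\partial f\|^2)^2$, from which plurisubharmonicity and the totally geodesic conclusion under pluriharmonicity follow exactly as in the paper, including the reduction of the $R^N$ term to the holomorphic sectional curvature of $\partial f(v)$ when $m=1$ for part (b).
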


\begin{proof} Direct calculation shows that
$$
\langle \sqrt{-1}\partial \bar{\partial} \log\left(1+\|\partial f\|^2\right), \frac{1}{\sqrt{-1}}v\wedge \bar{v}\rangle =\frac{\partial_v \bar{\partial}_{\bar{v}}\|\partial f\|^2}{1+\|\partial f\|^2}-\frac{\left|\partial_v \|\partial f\|\right|^2}{\left(1+\|\partial f\|^2\right)^2}.
$$
Under the curvature assumption of (a) we have that, under the normal coordinates
\begin{eqnarray*}
\langle \sqrt{-1}\partial \bar{\partial} \log\left(1+\|\partial f\|^2\right), \frac{1}{\sqrt{-1}}v\wedge \bar{v}\rangle&\ge& \frac{\|D_v \partial_{(\cdot)} f\|^2}{1+\|\partial f\|^2}-\frac{\left|\partial_v \|\partial f\|\right|^2}{\left(1+\|\partial f\|^2\right)^2}\\
&=&\frac{\|D_v \partial_{(\cdot)} f\|^2}{(1+\|\partial f\|^2)^2}+\frac{ D_v f^i_{\alpha} D_{\bar{v}}f^{\bar{i}}_{\bar{\alpha}} |f^k_{\gamma}|^2 -\left| D_vf^i_{\alpha}f^{\bar{i}}_{\bar{\alpha}} \right|^2}{(1+\|\partial f\|^2)^2}\\
&\ge& \frac{\|D_v \partial_{(\cdot)} f\|^2}{(1+\|\partial f\|^2)^2}.
\end{eqnarray*}
From this estimate the claims in (a) follow easily.
If $m=1$, let $v=a\frac{\partial\, }{\partial z^1}$ for some $a\in \mathbb{C}$. Hence
$$R^N(\partial f_\alpha, \bar{\partial}f_{\bar{\beta}}, \partial f(v), \bar{\partial} f(\bar{v}))
=|a|^2R_{i\bar{i} j\bar{j}}|f^i_1|^2 |f^{j}_1|^2\le 0
$$
under the assumption that $N$ has nonnegative holomorphic sectional curvature \cite{Zheng-B}. The rest of the proof is the same.
\end{proof}

Another quantity which enjoys a similar Bochner type formula is $\frac{(f^* \omega_h)^m}{\omega_g^m}$ for the case $m\le n$. Here $\omega_h=\sqrt{-1}\sum_{i, j=1}^nh_{i\bar{j}}dw^i\wedge d\bar{w}^j$ is the K\"ahler form of $N^n$ and $\omega_g=\sqrt{-1} \sum_{\alpha, \beta =1}^m g_{\alpha\bar{\beta}}dz^\alpha\wedge d\bar{z}^\beta$ is the K\"ahler form of $M^m$. The equal dimensional case for the Lapalacian operator $\Delta$ (instead of $\partial\bar{\partial}$) was considered previously by various people, including Kobayashi, Yau, Mok-Yau, etc. We refer the readers to  \cite{Kobayashi-H}, \cite{Zheng-B} and references therein for details.

\begin{lemma}\label{lemma1-2}  For a holomorphic map $f: M^m \to N^n$ such that $df$ has rank  $m$ in a neighborhood of $p$, Let $D(x)=\frac{(f^* \omega_h)^m(x)}{\omega_g^m(x)}$ (which is positive in a neighborhood of $p$). Then for normal coordinates centered at $p$ and $f(p)$ such that at $p$, $df\left(\frac{\partial\, }{\partial z^{\alpha}}\right)=\lambda_\alpha \delta_{i\alpha} \frac{\partial\, }{\partial w^i}$ (namely \{$|\lambda_\alpha|^2$\} are singular values of $\partial f: T_p'M\to T_{f(p)}'N$), we have at $p$,
\begin{eqnarray}\label{Boch2}
\langle \sqrt{-1}\partial \bar{\partial} \log D, \frac{1}{\sqrt{-1}}v\wedge \bar{v}\rangle &=& \sum_{\alpha=1}^m \sum_{ m+1\le i  \le  n} \frac{|f^i_{\alpha v}|^2}{|\lambda_\alpha|^2}-\sum_{\alpha =1}^m R^N(\alpha, \bar{\alpha}, \partial f(v), \overline{\partial f(v)})   \nonumber\\
&\quad&  + Ric^M( v, \bar{v}).
\end{eqnarray}
Here $R^N(\alpha, \bar{\alpha}, \partial f(v), \overline{\partial f(v)})=R^N(\frac{\partial \, }{\partial w^{\alpha}}, \frac{\partial \, }{\partial \bar{w}^{\alpha}}, \partial f(v), \overline{\partial f(v)})$.
\end{lemma}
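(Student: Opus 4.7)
The plan is to prove the formula by a direct pointwise computation in the specified normal coordinates, exploiting the vanishing of the first-order derivatives of $g$ and $h$ at $p$ and $f(p)$. The starting observation is that
\[
\frac{(f^*\omega_h)^m}{\omega_g^m} = \frac{\det(\eta)}{\det(g)},
\]
where $\eta_{\alpha\bar\beta} := h_{i\bar j}(f)\, f^i_\alpha\, \overline{f^j_\beta}$ is the Hermitian pullback matrix (so $f^*\omega_h = \sqrt{-1}\,\eta_{\alpha\bar\beta}\,dz^\alpha\wedge d\bar z^\beta$), and therefore $\log D = \log\det\eta - \log\det g$. I would handle the two pieces separately via the standard identity
\[
\partial\bar\partial \log\det M = \mathrm{tr}\bigl(M^{-1}\partial\bar\partial M\bigr) - \mathrm{tr}\bigl(M^{-1}\partial M\cdot M^{-1}\bar\partial M\bigr),
\]
applied to the positive Hermitian $m\times m$ matrices $M=\eta$ and $M=g$, and then evaluate at $p$.

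The $\log\det g$ piece is easy: at $p$ the K\"ahler normal-coordinate conditions $g_{\alpha\bar\beta}(p)=\delta_{\alpha\beta}$ and $\partial g(p)=\bar\partial g(p)=0$ kill the quadratic term, while $\partial_v\bar\partial_{\bar v}g_{\alpha\bar\beta}(p)=-R^M_{\bar\beta\alpha\bar\delta\gamma}v^\gamma\bar v^\delta$ contracts with $g^{-1}(p)=\mathrm{Id}$ to yield $-\operatorname{Ric}^M(v,\bar v)$; this accounts for the last term of (\ref{Boch2}). For $\log\det\eta$, the hypothesis $\partial f(\partial/\partial z^\alpha)=\lambda_\alpha\,\partial/\partial w^\alpha$ gives $\eta_{\alpha\bar\beta}(p)=|\lambda_\alpha|^2\delta_{\alpha\beta}$, and the derivatives of $\eta$ at $p$ simplify sharply because (i) $f$ is holomorphic, so $\bar\partial f^i_\alpha=0$; (ii) $\partial h/\partial w$ and $\partial h/\partial\bar w$ vanish at $f(p)$ in the target normal coordinates; and (iii) $(\partial^2 h_{i\bar j}/\partial w^l\partial\bar w^k)(f(p)) = -R^N_{\bar j i\bar k l}$. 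A short chain-rule calculation then yields $\partial_v\eta_{\alpha\bar\beta}(p)=\bar\lambda_\beta f^\beta_{\alpha v}$, $\bar\partial_{\bar v}\eta_{\alpha\bar\beta}(p)=\lambda_\alpha\,\overline{f^\alpha_{\beta v}}$, and
\[
\partial_v\bar\partial_{\bar v}\eta_{\alpha\bar\beta}(p) = -\lambda_\alpha\bar\lambda_\beta\, R^N_{\bar\beta\alpha\bar k l}\, f^l_v\bar f^k_{\bar v} + \sum_{i=1}^n f^i_{\alpha v}\,\overline{f^i_{\beta v}}.
\]
Contracting the curvature piece with $\eta^{-1}(p)$ produces precisely $-\sum_{\alpha=1}^m R^N(\alpha,\bar\alpha,\partial f(v),\overline{\partial f(v)})$.

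The only nontrivial point is the interplay of the two quadratic contributions in the second-order derivatives of $f$. The trace term $\mathrm{tr}(\eta^{-1}\partial_v\bar\partial_{\bar v}\eta)$ contributes $\sum_{\alpha=1}^m\sum_{i=1}^n |f^i_{\alpha v}|^2/|\lambda_\alpha|^2$, with the target index $i$ running over all of $\{1,\ldots,n\}$, while the quadratic term $-\mathrm{tr}(\eta^{-1}\partial_v\eta\cdot\eta^{-1}\bar\partial_{\bar v}\eta)$ contributes $-\sum_{\alpha=1}^m\sum_{\gamma=1}^m |f^\gamma_{\alpha v}|^2/|\lambda_\alpha|^2$, with $\gamma$ running only over the $m$ ``tangential'' target directions $\{1,\ldots,m\}$. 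Matching $i=\gamma$ in the range $\{1,\ldots,m\}$ produces an exact cancellation, leaving $\sum_{\alpha=1}^m\sum_{i=m+1}^n |f^i_{\alpha v}|^2/|\lambda_\alpha|^2$, which is the first term of (\ref{Boch2}). The main obstacle is thus careful bookkeeping: one must keep the domain indices $\alpha,\beta\in\{1,\ldots,m\}$ strictly separate from the target indices $i,j\in\{1,\ldots,n\}$ and verify the precise cancellation that isolates the ``normal'' target directions $m+1,\ldots,n$.
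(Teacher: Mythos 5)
Your proposal is correct and follows essentially the same route as the paper: both write $\log D=\log\det(\eta)-\log\det(g)$ with $\eta_{\alpha\bar\beta}=h_{i\bar j}f^i_\alpha\overline{f^j_\beta}$, compute $\partial\bar\partial\log\det$ in normal coordinates at $p$ and $f(p)$ (the $g$-part giving $\operatorname{Ric}^M(v,\bar v)$, the second-derivative-of-$h$ part giving the $-\sum_\alpha R^N(\alpha,\bar\alpha,\partial f(v),\overline{\partial f(v)})$ term), and observe the same cancellation between the full trace $\sum_{i=1}^n|f^i_{\alpha v}|^2/|\lambda_\alpha|^2$ and the quadratic term $-\operatorname{tr}(\eta^{-1}\partial\eta\,\eta^{-1}\bar\partial\eta)$ over the tangential indices $1,\ldots,m$, isolating the normal directions $m+1,\ldots,n$. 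No gaps; the bookkeeping you describe is exactly what the paper's proof carries out.
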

\begin{proof} As stated in the lemma, after unitary changes of frames of $T'_pM$ and $T_{f(p)}'N$, $df$, or $\partial f$ can be expressed as  $df\left(\frac{\partial\, }{\partial z^{\alpha}}\right)=\partial f\left(\frac{\partial\, }{\partial z^{\alpha}}\right)=\lambda_\alpha \delta_{i\alpha} \frac{\partial\, }{\partial w^i}$. We can perform the computation at $p$ and $f(p)$, where
$$
R^M_{\alpha \bar{\beta} \gamma \bar{\delta}}=-g_{\alpha \bar{\beta}, \gamma \bar{\delta}}, \quad R^N_{i\bar{j}k\bar{l}}=-h_{i\bar{j}, k\bar{l}}.
$$
Here $g_{\alpha \bar{\beta}, \gamma \bar{\delta}}=\frac{\partial^2 g_{\alpha\bar{\beta}}}{\partial z^\gamma \partial \bar{z}^\delta}$. Moreover at $f(p)$, $h_{i\bar{j}, k}=h_{i\bar{j}, \bar{k}}=0$.  To simplify  notations we write $\frac{\partial f^{i}}{\partial z^\alpha}$ as $f^{i}_\alpha$, and
$\frac{\partial^2 f^i}{\partial z^\alpha \partial z^{\gamma}}$ as $f^i_{\alpha \gamma}$. For $v=\sum_{\gamma} v^\gamma\frac{\partial \, }{\partial z^\gamma}$, $f^i_{\alpha v}=\sum_{\gamma} f^i_{\alpha \gamma}v^\gamma$. With respect to such coordinates let  $A=(A_{\alpha \bar{\beta}})$ be the Hermitian symmetric matrix with
$A_{\alpha \bar{\beta}}=f^i_{\alpha} h_{i\bar{j}}\overline{f^{j}_\beta}$. Then $D=\frac{\det(A)}{\det(g_{\alpha \bar{\beta}})}$. We denote $(A^{\alpha\bar{\beta}})$ as the inverse of $A$. Hence
$$
\frac{\partial^2\quad}{\partial z^{\gamma} \partial \bar{z}^\delta} \log D=\frac{\partial^2\quad}{\partial z^{\gamma} \partial \bar{z}^\delta} \log \det(A)+Ric^M(\gamma, \bar{\delta}).
$$
Direct calculation shows that
\begin{eqnarray*}
\left(\log \det(A)\right)_{\bar{\delta}}&=& A^{\alpha \bar{\beta}}\left[ f^i_\alpha h_{i\bar{j}}\overline{f^j_{\beta \delta}}+f^i_{\alpha} h_{i\bar{j}, \bar{l}} \overline{f^j_{\beta}}\overline{f^l_{ \delta}}\right]\\
&=&\sum_{\alpha} \frac{\overline{f^\alpha_{\alpha \delta}}\cdot  \lambda_\alpha}{|\lambda_\alpha|^2}.
\end{eqnarray*}
The last line only holds at the point $p$, while the first holds in the neighborhood.
Similarly,
\begin{eqnarray*}
\left(\log \det(A)\right)_{\gamma}&=& A^{\alpha \bar{\beta}}\left[ f^i_{\alpha \gamma} h_{i\bar{j}}\overline{f^j_{\beta}}+f^i_{\alpha} h_{i\bar{j}, k} \overline{f^j_{\beta}}f^k_{ \gamma}\right]\\
&=&\sum_{\alpha} \frac{f^\alpha_{\alpha \gamma}\cdot  \overline{\lambda_\alpha}}{|\lambda_\alpha|^2}.
\end{eqnarray*}
Taking second derivative, and at the end restricting to $p$,  we have
\begin{eqnarray*}
\left(\log \det(A)\right)_{\gamma\bar{\delta}}&=& -A^{\alpha \bar{s}}\frac{\partial A_{t\bar{s}}}{\partial z^\gamma} A^{t\bar{\beta}}\left[ f^i_\alpha h_{i\bar{j}}\overline{f^j_{\beta \delta}}+f^i_{\alpha} h_{i\bar{j}, \bar{l}} \overline{f^j_{\beta}}\overline{f^l_{ \delta}}\right] \\
&\quad&+\sum_{1\le \alpha\le m, 1\le i\le n}\frac{f^i_{\alpha \gamma} \overline{f^i_{\alpha \delta}}}{|\lambda_\alpha|^2}+\sum_{\alpha}\sum_{i j kl} \frac{-R^N_{i\bar{j}k\bar{l}}f^i_{\alpha} \overline{f^j_\alpha}f^k_{\gamma}\overline{f^l_\delta}}{|\lambda_\alpha|^2} \\
&=&\sum_{\alpha=1}^m \sum_{ m+1\le i  \le  n} \frac{f^i_{\alpha \gamma} \overline{f^i_{\alpha \delta}}}{|\lambda_\alpha|^2}-\sum_{\alpha=1}^mR^N_{\alpha\bar{\alpha} \gamma \bar{\delta}}\lambda_\gamma \overline{\lambda_\delta}.
\end{eqnarray*}
 Here we have used that
$$
A^{\alpha \bar{s}}\frac{\partial A_{t\bar{s}}}{\partial z^\gamma} A^{t\bar{\beta}}\left[ f^i_\alpha h_{i\bar{j}}\overline{f^j_{\beta \delta}}+f^i_{\alpha} h_{i\bar{j}, \bar{l}} \overline{f^j_{\beta}}\overline{f^l_{ \delta}}\right]=\sum_{\alpha, \beta=1}^m \frac{f^\alpha_{\beta \gamma} \overline{f^\alpha_\alpha} f^\alpha_\alpha \overline{f^\alpha_{\beta \delta}}}{|\lambda_\alpha|^2|\lambda_\beta|^2}=\sum_{\alpha, \beta} \frac{f^{\alpha}_{\beta\gamma}\overline{f^\alpha_{\beta \delta}}}{|\lambda_\beta|^2}.
$$
Putting all the above together we have the Bochner formula claimed.
\end{proof}

\begin{corollary}\label{coro-to-lm2} If  $Ric^N_m\le 0$  and $Ric^M \ge 0$,
$\log D$ is a plurisubharmonic function.
\end{corollary}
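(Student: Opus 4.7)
The plan is to apply Lemma~\ref{lemma1-2} directly and verify that, under the stated curvature hypotheses, each of the three terms on the right-hand side of (\ref{Boch2}) is nonnegative, so that the Levi form of $\log D$ is nonnegative wherever $\partial f$ has full rank.

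At any point $p\in M$ where $\partial f$ has rank $m$, I would use the adapted normal coordinates of Lemma~\ref{lemma1-2}, in which $\partial f(\partial/\partial z^\alpha) = \lambda_\alpha\,\delta_{i\alpha}\,\partial/\partial w^i$. The first sum $\sum_{\alpha}\sum_{i>m}|f^i_{\alpha v}|^2/|\lambda_\alpha|^2$ is manifestly $\ge 0$ as a sum of squares, and $Ric^M(v,\bar v)\ge 0$ is immediate from the hypothesis on $M$. For the middle term, the key observation is that because $h_{i\bar j}(f(p))=\delta_{ij}$ and the SVD alignment places the image subspace $\Sigma:=\partial f(T'_pM)$ equal to $\mathrm{span}\{\partial/\partial w^\alpha:1\le\alpha\le m\}$, the vectors $\{\partial/\partial w^\alpha\}_{\alpha=1}^m$ constitute a unitary basis of the $m$-dimensional subspace $\Sigma\subset T'_{f(p)}N$. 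By the definition of $Ric^N_m$ recorded earlier in the introduction,
\[
\sum_{\alpha=1}^m R^N(\alpha,\bar\alpha,\partial f(v),\overline{\partial f(v)}) \;=\; Ric^N_m\bigl(f(p),\Sigma\bigr)\bigl(\partial f(v),\overline{\partial f(v)}\bigr) \;\le\; 0,
\]
so this term, with its minus sign, contributes nonnegatively. Adding the three pieces yields $\sqrt{-1}\,\partial\bar\partial\log D\ge 0$ on the open set $U=\{D>0\}$.

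To promote this to plurisubharmonicity on all of $M$, I would note that $D\ge 0$ is smooth, so $\log D$ is upper semicontinuous with values in $[-\infty,\infty)$, while the degeneracy locus $M\setminus U$ is (locally) contained in an analytic subvariety cut out by the vanishing of the $m\times m$ minors of the Jacobian, hence is pluripolar. The standard extension principle for psh functions — a USC function that is psh off a pluripolar set and identically $-\infty$ on it is psh on the whole domain — then gives the desired conclusion. The only genuinely substantive point in the whole argument is the identification of the middle curvature term with $Ric^N_m$ restricted to the image subspace $\Sigma$; the rest is sign-counting together with a standard extension across a thin set.
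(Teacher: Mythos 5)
Your proof is correct and follows the paper's (implicit) route: the corollary is stated as an immediate consequence of Lemma \ref{lemma1-2}, and your sign check --- in particular identifying $\sum_{\alpha=1}^m R^N(\alpha,\bar\alpha,\partial f(v),\overline{\partial f(v)})$ with $Ric^N_m$ evaluated on the image subspace $\Sigma=\partial f(T_p'M)$ at the vector $\partial f(v)\in\Sigma$ --- is exactly how the formula is meant to be used (and is used again later, e.g.\ in Corollary \ref{Coro-31}). Your extension of plurisubharmonicity across the degeneracy locus $\{D=0\}$, an analytic set on which $\log D\equiv-\infty$, is a standard step the paper leaves implicit, and you handle it correctly.
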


\section{ Proof of Theorem \ref{thm1} and Theorem \ref{thm2}}

To prove part (i) of Theorem \ref{thm1}, note that $u(x)=\log (\|\partial f\|^2(x)+1)$ is a plurisubharmonic function by part (a) of  Corollary \ref{coro-to-lm1}. The growth assumption of the gradient in the theorem implies that $u(x)=o\left(\log(r(x))\right)$. Hence Theorem 0.2 of \cite{NT1} implies that $u$ is a constant. This together with the second part of (a) in Corollary \ref{coro-to-lm1} implies that $f$ is totally geodesic.

To prove part (ii) of Theorem \ref{thm1}, we use part (b) of Corollary \ref{coro-to-lm1} instead.

Here we should remark that the argument of proving the  three circle theorem in \cite{Liu} works without any changes. Namely one can conclude the following corollary.

\begin{corollary}\label{coro:31} Let $M, N$ be as in Theorem \ref{thm1}. Let $f: M\to N$ be a holomorphic map.
Let $M(r)=\sup_{x\in B_p(r)}\|\partial f\|(x)$. Then for any $r_1<r_2<r_3$:
\begin{equation}\label{eq:31}
\log M(r_2)\le \frac{1}{\log r_3-\log r_1}\left( (\log r_3-\log r_2)\log M(r_1)+(\log r_2-\log r_1) \log M(r_3)\right).
\end{equation}
\end{corollary}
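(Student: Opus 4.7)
The plan is to deduce the inequality (\ref{eq:31}) from the three-circle theorem for plurisubharmonic functions proved in \cite{Liu}, by exhibiting $\log \|\partial f\|$ as a plurisubharmonic function to which that theorem applies.

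First, I would upgrade Corollary~\ref{coro-to-lm1}(a): the constant $1$ in $\log(1+\|\partial f\|^2)$ plays no role in the Bochner computation, so the same argument shows that $\log(A+\|\partial f\|^2)$ is plurisubharmonic on $M$ for every $A>0$. Letting $A\searrow 0$ and taking the decreasing limit, the (possibly $-\infty$-valued) function $w(x)=\log\|\partial f\|(x)$ is plurisubharmonic, set to $-\infty$ at zeros of $\partial f$. Since $\log M(r)=\sup_{B_p(r)} w$, the claimed inequality (\ref{eq:31}) is exactly the statement that $\log M(r)$ is a convex function of $\log r$, i.e.\ the Hadamard three-circle phenomenon for the plurisubharmonic function $w$.

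Second, I would invoke the main analytic input of \cite{Liu}: on a complete K\"ahler manifold of nonnegative bisectional curvature, if $v$ is plurisubharmonic and $N_v(r):=\sup_{B_p(r)} v$, then $N_v(r)$ is a convex function of $\log r$. The scheme is: for fixed $r_1<r_3$, pick the affine function $\ell(s)=as+b$ in the variable $s=\log r$ so that $\ell(\log r_i)=N_v(r_i)$ for $i=1,3$ (so $a\ge 0$ by monotonicity of $N_v$); form $\phi(x):=v(x)-a\log r(x)-b$, which is nonpositive on $\partial B_p(r_1)\cup\partial B_p(r_3)$; and establish $\phi\le 0$ on the closed annulus $\overline{B_p(r_3)}\setminus B_p(r_1)$. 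Applied to $v=w$ this yields $\log M(r_2)\le \ell(\log r_2)$, which is (\ref{eq:31}).

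The main obstacle is the annular maximum principle for $\phi$. Since $\log r(x)$ is in general neither smooth at the cut locus nor plurisubharmonic on $M$, one cannot apply the PSH maximum principle to $\phi$ directly; this is what forces nontrivial geometric input. In Liu's scheme it is handled by regularizing the distance function away from the cut locus, using the Laplacian comparison $\Delta\log r\le (2m-2)/r^2$ (available because nonneg bisec implies nonneg Ricci), and combining these with the plurisubharmonicity of $v$ via a barrier construction ruling out interior maxima of $\phi$. Every hypothesis of that machinery is met verbatim in our setting (nonneg bisec on $M$, plurisubharmonicity of $w$), so no modifications are needed and the corollary follows.
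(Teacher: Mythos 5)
Your reduction is the same one the paper intends: use the Bochner estimate of Corollary \ref{coro-to-lm1} with the constant $1$ replaced by $A>0$ (for case (ii) of Theorem \ref{thm1} one uses part (b) of that corollary), let $A\searrow 0$ to get plurisubharmonicity of $w=\log\|\partial f\|$, and then run Liu's three-circle argument on $w$, noting that $\log M(r)=\sup_{B_p(r)}w$; the paper's proof is precisely the remark that Liu's argument ``works without any changes'' for this plurisubharmonic function. Steps one and two of your proposal are correct.

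The problem is your account of how the interior maximum of $\phi=v-a\log r-b$ is excluded, which is the only nontrivial step and the one you would need if you were not treating \cite{Liu} as a pure black box. The Laplacian comparison $\Delta\log r\le (2m-2)/r^2$ under $\mathrm{Ric}\ge 0$ cannot do it: since $a\ge 0$, it only gives $\Delta\phi\ge \Delta v-a(2m-2)/r^2\ge -a(2m-2)/r^2$, which has the wrong sign and rules out nothing; and indeed the convexity of $\sup_{B_p(r)}v$ in $\log r$ is not a nonnegative-Ricci phenomenon (Liu even proves a converse showing it forces nonnegative holomorphic sectional curvature), so no argument based solely on Ricci/Laplacian comparison can succeed. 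What Liu's proof actually uses is a comparison in a single complex direction: off the cut locus (with Calabi's trick otherwise), take the $(1,0)$-direction $e$ spanned by $\nabla r-\sqrt{-1}J\nabla r$; since $J\nabla r$ is parallel along the radial geodesic, the index-form comparison only sees $R(\gamma',J\gamma',\gamma',J\gamma')$, i.e.\ the holomorphic sectional curvature, and nonnegativity of it gives $\nabla^2 r(J\nabla r,J\nabla r)\le 1/r$, hence $\langle\sqrt{-1}\partial\bar\partial\log r, \frac{1}{\sqrt{-1}}e\wedge\bar e\rangle\le 0$; plurisubharmonicity of $v$ is then invoked only in this one direction, and a small perturbation excludes the interior maximum. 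Since nonnegative bisectional curvature implies nonnegative holomorphic sectional curvature, the hypotheses of Liu's actual argument are met and your conclusion does follow, but the mechanism you wrote down would not prove it; the review-level fix is simply to replace the Ricci/Laplacian-comparison sentence by the radial complex Hessian comparison above.
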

This together with the consequences of (\ref{eq:31}) derived  in \cite{Liu} implies that the boundedness of $\|\partial f\|$ follows from (\ref{eq:11}).

To prove part (i) of Theorem \ref{thm2}, we argue by contradiction. Assume that there exists a holomorphic map $f$ such that $\partial f: T'_xM \to T'_{f(x)}N$ is of full rank for some $x$. Let $D$ be the function defined in Lemma \ref{lemma1-2}. Since $M$ is compact, $D$ attains its maximum at some point $x_0$. Then in a neighborhood of $x_0$,  $D\ne 0$. Now let $\{|\lambda_\gamma|^2(x)\}$ be the singular value of $\partial f$ at $x$. Define the following second order elliptic operator pointwisely
$$
\mathcal{L}=\sum_{\gamma}\frac{1}{2|\lambda_{\gamma}|^2}\left(\nabla_{\gamma}\nabla_{\bar{\gamma}}
+\nabla_{\bar{\gamma}}\nabla_{\gamma}\right).
$$
Applying (\ref{Boch2}), we obtain that at $x_0$, with respect to the normal coordinates specified in Lemma \ref{lemma1-2}
$$
0\ge \mathcal{L} \log D\ge -\sum_{\gamma=1}^m \sum_{\alpha =1}^m \frac{1}{|\lambda_\gamma|^2}R^N\left(\alpha, \bar{\alpha}, \partial f\left(\frac{\partial\,}{\partial z^\gamma}\right), \overline{\partial f\left(\frac{\partial\,}{\partial z^\gamma}\right)}\right).
$$
Note that $\sum_{\gamma=1}^m \sum_{\alpha =1}^m \frac{1}{|\lambda_\gamma|^2}R^N(\alpha, \bar{\alpha}, \partial f(\frac{\partial\,}{\partial z^\gamma}), \overline{\partial f(\frac{\partial\,}{\partial z^\gamma})})$ is nothing but the $m$-scalar curvature of $\Sigma=\mbox{Span}\{ \partial f(\frac{\partial\,}{\partial z^\gamma})\}$, which is negative by the assumption of Theorem \ref{thm2} (namely the right hand above is positive).
This is a contradiction to $\mathcal{L}\log D\le 0$ at $x_0$. Similar argument proves the same result if $Ric^M>0$ and $S^N_m \le0$.

Note that the above argument  implies a rigidity result when $S_m^N \le 0$ is allowed (but with other assumptions).

\begin{corollary}\label{coro-to-coro22} Assume that $\dim_{\mathbb{C}}M=m \le n=\dim_{\mathbb{C}}N$. Let $(M, g)$ be a compact K\"ahler manifold such that $Ric^M \ge 0$. Let $(N^n, h)$ be a complete K\"ahler manifold such that $S^N_m(y)\le0$. Then for any non-degenerate holomorphic map $f:M\to N$,  $D$ must be a constant. Moreover $Ric^M \equiv 0$, and $S^N_m=0$ at least along a $m$-dimensional submanifold. If furthermore $Ric^N_m \le 0$ and $(M^m, g)$ has nonnegative bisectional curvature, then $f$ must be totally geodesic. Any holomorphic map $f: M \to N$ is totally geodesic if  $H^N\le 0$  and $M$ is compact with $Ric^M\ge 0$.
\end{corollary}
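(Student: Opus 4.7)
The argument follows the strategy of the proof of Theorem \ref{thm2}(i), now promoted from a pointwise equality at the maximum of $D$ to a global statement via the strong maximum principle. Let $A_{\alpha\bar\beta}:=h_{i\bar j}f^i_\alpha\overline{f^j_\beta}$ be the pull-back Hermitian form on $M$ and introduce the contraction $\mathcal L u:=A^{\alpha\bar\beta}u_{\alpha\bar\beta}$, a second-order elliptic operator on the open non-degenerate locus $\{D>0\}$ which in the frame of Lemma \ref{lemma1-2} coincides with $\sum_\gamma|\lambda_\gamma|^{-2}u_{\gamma\bar\gamma}$. Applying $\mathcal L$ to $\log D$ via (\ref{Boch2}) yields on $\{D>0\}$
$$\mathcal L\log D=\sum_{\alpha,\gamma,\,i>m}\frac{|f^i_{\alpha\gamma}|^2}{|\lambda_\alpha|^2|\lambda_\gamma|^2}-S_m^N(f(x),\Sigma_x)+\sum_\gamma\frac{\Ric^M(\partial_\gamma,\bar\partial_\gamma)}{|\lambda_\gamma|^2},$$
which is $\ge 0$ by the hypotheses $S_m^N\le 0$ and $\Ric^M\ge 0$. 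Compactness of $M$ produces a maximum $x_0$ of $D$ with $D(x_0)>0$ by non-degeneracy, and $\mathcal L\log D(x_0)\le 0$. The strong maximum principle gives $\log D\equiv\log D(x_0)$ locally at $x_0$, and since $\{D=D(x_0)\}$ is then open, closed, and non-empty in the connected $M$, it equals all of $M$; thus $D$ is globally constant and $f$ is everywhere non-degenerate. The pointwise equality of the displayed identity then yields $S_m^N(f(x),\Sigma_x)=0$ along the image (an $m$-dimensional complex submanifold of $N$), and since $\Ric^M\ge 0$ is a positive semidefinite Hermitian form whose diagonal entries $\Ric^M(\partial_\gamma,\bar\partial_\gamma)$ all vanish in an orthonormal frame, $\Ric^M\equiv 0$ on $M$.

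For the second assertion, add the hypotheses that $M$ has nonnegative bisectional curvature and $\Ric^N_m\le 0$. I apply $\mathcal L$ to the globally defined smooth function $\|\partial f\|^2$ using Lemma \ref{lemma1-1}. In the diagonalizing frame,
$$(\|\partial f\|^2)_{\gamma\bar\gamma}=\|D_\gamma\partial f\|^2-|\lambda_\gamma|^2\sum_\alpha|\lambda_\alpha|^2 R^N(E_\alpha,\bar E_\alpha,E_\gamma,\bar E_\gamma)+\sum_\alpha|\lambda_\alpha|^2 R^M_{\gamma\bar\gamma\alpha\bar\alpha}.$$
Dividing by $|\lambda_\gamma|^2$ and summing in $\gamma$, the $R^N$ contribution regroups as $-\sum_\alpha|\lambda_\alpha|^2\Ric^N_m(E_\alpha,\bar E_\alpha)\ge 0$ by $\Ric^N_m\le 0$, while the $R^M$ contribution is term-wise $\ge 0$ by the bisectional-curvature hypothesis. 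Hence
$$\mathcal L\|\partial f\|^2\ge\sum_\gamma\frac{\|D_\gamma\partial f\|^2}{|\lambda_\gamma|^2}\ge 0.$$
At a maximum $p_1$ of $\|\partial f\|^2$ on compact $M$ we have $\mathcal L\|\partial f\|^2(p_1)\le 0$, forcing equality there, and hence $D\partial f(p_1)=0$; a second application of the strong maximum principle propagates $\|\partial f\|^2$ to be globally constant, and consequently $D\partial f\equiv 0$, i.e., $f$ is totally geodesic.

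For the final statement we drop non-degeneracy and work under only $H^N\le 0$ and $\Ric^M\ge 0$ with $M$ compact. Tracing Lemma \ref{lemma1-1} with $g^{v\bar v}$ produces
$$\Delta\|\partial f\|^2=\|D\partial f\|^2-\sum_{v,\alpha}R^N(\partial f_\alpha,\bar\partial f_{\bar\alpha},\partial f_v,\bar\partial f_{\bar v})+\tr(\Ric^M\!\cdot A),$$
where $A$ (now only positive semidefinite) is the pull-back form; the last term is the trace of the product of two positive semidefinite Hermitian operators and is therefore $\ge 0$. In a frame diagonalizing $\partial f$ at the base point the curvature sum becomes $\sum_{\alpha,v}|\lambda_\alpha|^2|\lambda_v|^2 R^N(E_\alpha,\bar E_\alpha,E_v,\bar E_v)$, to which Royden's lemma applied on the $m$-dimensional span of $\{E_\alpha\}$ gives the upper bound by a non-positive multiple of $\sup H^N$, hence $\le 0$. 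Therefore $\Delta\|\partial f\|^2\ge\|D\partial f\|^2\ge 0$ on compact $M$, and integration forces $D\partial f\equiv 0$, i.e., $f$ is totally geodesic.

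The main subtlety of the first two claims is that $\mathcal L$ is not a single globally defined differential operator on $M$, but an elliptic operator (depending smoothly on $\partial f$) which is only uniformly elliptic on compact subsets of the non-degenerate locus $\{D>0\}$. One must verify that the pointwise lower bound $\mathcal L u\ge 0$ derived in the diagonalizing frame persists throughout an entire neighborhood of an arbitrary point, not merely at the extremal point, so that the strong maximum principle combined with the standard open--closed argument globalizes the conclusion. The obstacle for the third claim is instead to invoke Royden's lemma in its correct form, converting the quadratic sum of curvature components into a bound by the supremum of $H^N$ on the relevant $m$-plane; this is where the hypothesis $H^N\le 0$ enters critically.
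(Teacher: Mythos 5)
Your proposal is correct and follows essentially the same route as the paper: the same pointwise-diagonalized operator $\mathcal{L}$ (which you simply write invariantly as the trace against the pullback form $A^{\alpha\bar\beta}$) applied to $\log D$ and to $\|\partial f\|^2$ via Lemmas \ref{lemma1-2} and \ref{lemma1-1}, the strong maximum principle on the non-degenerate locus, and the appendix (Royden) averaging estimate for the case $H^N\le 0$. The only differences are expository: you spell out the open--closed globalization that the paper leaves implicit, and you conclude the last claim by integrating $\Delta\|\partial f\|^2\ge 0$ rather than quoting the maximum principle, which is equivalent.
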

\begin{proof} Given that $f$ is holomorphic, the locus where $D\ne0$ is open and dense, with its complement being a closed subvariety. Over this open dense subset
$$
\mathcal{L} \log D \ge 0.
$$
Hence $\log D$ must be a constant since it attains an interior maximum.  The Ricci flat part and $S^N_m$ vanishing along a $m$-submanifold follow from Lemma \ref{lemma1-2}.

Under the  condition that $(M, g)$ has nonnegative bisectional curvature and $Ric^N_m \le 0$, since $D=constant$ on $M$, we apply the operator $\mathcal{L}$ (which is well defined due to that $D\ne 0$) to $\|\partial f\|^2$. Then  Lemma \ref{Boch1} implies that
$$
\mathcal{L} \|\partial f\|^2 =\sum_{i=1}^n \sum_{\alpha, \gamma=1}^m \frac{|f^i_{\alpha \gamma}|^2}{|\lambda_\gamma|^2}-\sum_{\alpha}Ric^N(\alpha, \overline{\alpha})|\lambda_\alpha|^2+\sum_{\alpha, \gamma} R^M_{\gamma \bar{\gamma} \alpha \bar{\alpha}}\frac{|\lambda_\alpha|^2}{|\lambda_\gamma|^2}\ge0.
$$
The maximum principle implies that $\mathcal{L}\|\partial f\|^2=0$ and $\|\partial f\|$ is a constant. The part $f$ being total geodesic follows from $f^{i}_{\alpha \gamma}=0$ for all $1\le i\le n, 1\le \alpha, \gamma \le m$.

For the case $H^N\le 0$ and $Ric^M \ge 0$, Lemma \ref{Boch1} implies that $$
\Delta \|\partial f\|^2 =\sum_{i=1}^n \sum_{\alpha, \gamma=1}^m |f^i_{\alpha \gamma}|^2-R^N_{i\bar{i}j\bar{j}}|f^i_\alpha|^2|f^j_\gamma|^2+Ric^M_{\alpha \bar{\alpha}}|f^i_\alpha|^2\ge 0.
$$
The argument in Appendix implies that the right hand side above is nonnegative. Then claim then follows by the maximum principle. \end{proof}

  Recall from the introduction that we say $N$ has $k$-dimensional Ricci curvature bounded from above by $-\kappa$ (denoted as $Ric_k^N(v, \bar{v})\le -\kappa |v|^2$), if when restricted to any $k$-dimensional subspace $\Sigma \subset T_y'N$, the Ricci curvature of curvature tensor of $R^N|_\Sigma$,
$$
Ric_{y, \Sigma}(v, \bar{v})\doteqdot \sum_{\gamma=1}^k R(E_\gamma, \overline{E}_\gamma, v, \bar{v})
$$
is bounded from above by $-\kappa |v|^2$ for any $v\in \Sigma$. Here $\{E_\gamma\}$ is a unitary basis of $\Sigma\subset T_y'N$. Note that for $k=1$, $Ric_1$ is the same as the holomorphic sectional curvature. However $Ric^N_k$ for $k\ge 2$ is independent of the holomorphic sectional curvature $H^N$ in view of the examples in \cite{Hitchin} and \cite{NZ}. The following Schwarz type estimate generalizes previous one proved for $m=n$ (cf. page 190 of  \cite{Zheng-B}).

\begin{corollary}\label{Coro-31} Let $f:M^m\to N^n$ ($m\le n$) be a holomorphic map with $M$ being a complete manifold. Assume that  $Ric^M $ is bounded from below and  the scalar curvature $S^M(x)\ge -K$.  Assume further that the $m$-Ricci  of $N$,  $Ric^N_m(x)\le -\kappa<0$. Then we have the  estimate
$$
D\le \left(\frac{K}{m\kappa}\right)^m.
$$
\end{corollary}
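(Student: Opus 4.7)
The plan is to trace the $\partial\bar\partial$-Bochner formula of Lemma \ref{lemma1-2} to obtain a pointwise differential inequality of the form $\Delta\log D\ge m\kappa D^{1/m}-K$ on the open set $\{D>0\}$, and then to conclude by the Omori--Yau maximum principle, which is available because $(M,g)$ is complete with $\Ric^M$ bounded from below.

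For the first step, at a point $p$ with $D(p)>0$ I would choose normal coordinates $(z^\alpha)$ on $M$ and $(w^i)$ on $N$ as in Lemma \ref{lemma1-2}, so that $\partial f(\p/\p z^\alpha)=\lambda_\alpha\,\delta_{i\alpha}\,\p/\p w^i$ at $p$. Set $\Sigma:=\partial f(T'_pM)=\mathrm{Span}\{\p/\p w^\alpha:1\le\alpha\le m\}\subset T'_{f(p)}N$. Taking $v=\p/\p z^\beta$ in (\ref{Boch2}) and summing over $\beta=1,\dots,m$ (discarding the manifestly nonnegative first term) gives
\[\Delta\log D\ \ge\ -\sum_{\alpha,\beta=1}^m R^N\!\left(\tfrac{\p}{\p w^\alpha},\tfrac{\p}{\p\bar w^\alpha},\partial f(\tfrac{\p}{\p z^\beta}),\overline{\partial f(\tfrac{\p}{\p z^\beta})}\right)+S^M.\]
The inner sum over $\alpha$ is exactly $\Ric_{\Sigma}\!\left(\partial f(\p/\p z^\beta),\overline{\partial f(\p/\p z^\beta)}\right)$, the Ricci of the restricted tensor $R^N|_\Sigma$. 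Since $\partial f(\p/\p z^\beta)\in\Sigma$ with $|\partial f(\p/\p z^\beta)|^2=|\lambda_\beta|^2$, the hypothesis $\Ric_m^N\le-\kappa$ gives each such summand $\ge\kappa|\lambda_\beta|^2$. Combined with $S^M\ge-K$ and the arithmetic--geometric mean inequality $\sum_\beta|\lambda_\beta|^2\ge m\bigl(\prod_\beta|\lambda_\beta|^2\bigr)^{1/m}=mD^{1/m}$ (using $D=\prod_\beta|\lambda_\beta|^2$ in these coordinates), this yields the advertised inequality $\Delta\log D\ge m\kappa D^{1/m}-K$ on $\{D>0\}$.

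The second step is the application of the maximum principle. If $M$ is compact and $D\not\equiv 0$, then $\log D$ attains its maximum at some interior $p_0\in\{D>0\}$, at which $\Delta\log D(p_0)\le0$ forces $D(p_0)\le(K/(m\kappa))^m$, and this bounds $\sup_M D$. In the general complete case I would apply Omori--Yau to a bounded regularization such as $V_\epsilon:=D^{1/m}/(1+\epsilon D^{1/m})\in[0,1/\epsilon)$; writing $v=D^{1/m}$ the displayed inequality becomes $\Delta v\ge\kappa v^2-(K/m)v+|\nabla v|^2/v$, and a direct computation of $\Delta V_\epsilon$ shows that along an Omori--Yau almost-maximizing sequence for $V_\epsilon$ the gradient corrections vanish in the limit, leaving $\kappa v_\infty\le K/m$. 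Sending $\epsilon\to 0$ then gives $\sup_M D^{1/m}\le K/(m\kappa)$ globally.

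The main obstacle is this noncompact step: $D$ need not attain its supremum and may be unbounded, while Omori--Yau supplies almost-maxima only for bounded test functions. The regularization $V_\epsilon$ (or any comparable bounded surrogate) must be chosen so that the positive curvature contribution $m\kappa D^{1/m}-K$ dominates the gradient corrections introduced by the change of variables at near-extremal points; this bookkeeping is standard in Schwarz-lemma type arguments but requires some care, and once it is carried out the bound $D\le(K/(m\kappa))^m$ follows uniformly on $M$.
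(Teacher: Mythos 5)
Your first step coincides with the paper's: summing the formula of Lemma \ref{lemma1-2} over a unitary frame, discarding the nonnegative Hessian term, and using $Ric^N_m\le-\kappa$ together with the arithmetic--geometric mean inequality yields $\Delta\log D\ge m\kappa D^{1/m}-K$ on $\{D>0\}$, and the compact case follows at once. The gap is in your noncompact step. Write $v=D^{1/m}$ and $V_\epsilon=\phi(v)$ with $\phi(t)=t/(1+\epsilon t)$. Then $\Delta V_\epsilon=\phi'(v)\Delta v+\phi''(v)|\nabla v|^2$, and after inserting $\Delta v\ge\kappa v^2-\frac{K}{m}v+\frac{|\nabla v|^2}{v}$ the total gradient coefficient is $\frac{\phi'(v)}{v}+\phi''(v)=\frac{1-\epsilon v}{v(1+\epsilon v)^3}$, which is negative precisely when $\epsilon v>1$. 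Since $|\nabla v|^2=(1+\epsilon v)^4|\nabla V_\epsilon|^2$, the unfavorable contribution has magnitude comparable to $\epsilon^2\,v\,|\nabla V_\epsilon|^2$. In the only case that needs to be excluded, namely $\sup_M D=\infty$, every Omori--Yau sequence for $V_\epsilon$ has $v(x_k)\to\infty$, and the principle of \cite{Omori} gives no joint rate between $|\nabla V_\epsilon|(x_k)\to0$ and $v(x_k)\to\infty$; hence your claim that ``the gradient corrections vanish in the limit'' is unjustified, and the argument fails to close exactly where it is needed. (A minor additional point: $D^{1/m}$ need not be $C^2$ on the zero set of $D$, though near-supremum points avoid that set.)

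Two repairs are available. The paper's own route is the Cheng--Yau localization of \cite{CY}: maximize $D^{1/m}\varphi_R$ (or its logarithm) with $\varphi_R=\eta(r/R)$, use first-derivative vanishing at the maximum and the Laplacian comparison theorem for $\Delta r$ --- this is where the hypothesis that $Ric^M$ is bounded below actually enters --- to get a local bound of the form recorded later as (\ref{eq:44}) in Corollary \ref{Coro:42}, and then let $R\to\infty$. Alternatively, your Omori--Yau strategy can be saved by a better transform: apply the principle to $F=-(1+v)^{-a}$ with $0<a<\frac12$; then the bad gradient term is bounded by $(1+v)^a|\nabla F|^2$ while the curvature term grows like $a\kappa v^{1-a}$, so along an Omori--Yau sequence the inequality forces $\sup_M v<\infty$. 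Once $v$ is bounded, apply Omori--Yau to $v$ itself, using $\Delta v\ge\kappa v^2-\frac{K}{m}v$, to conclude $\sup_M v\le K/(m\kappa)$, i.e.\ $D\le\left(\frac{K}{m\kappa}\right)^m$.
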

\begin{proof} Note that Lemma \ref{lemma1-2} implies that
$$
\Delta \log D \ge \kappa \sum_{\gamma} |\lambda_\gamma|^2 -K\ge mD^{\frac{1}{m}}\kappa -K.
$$
The claimed result follows from a similar argument as in the proof of classical  Schwarz Lemma (see Theorem 7.23 of \cite{Zheng-B}) by applying suitable cut-off techniques and the maximum principle as in \cite{CY} (see also the next section). The lower bound of the Ricci curvature is needed to apply the Laplacian comparison theorem on distance function (later a stronger lower bound is needed to apply the Hessian comparison theorem).
\end{proof}

 The part (ii) of Theorem \ref{thm2} is an immediate consequence of the above estimate applying to $K=0$. Note that the negative upper bound $Ric^N_m\le -\kappa$ holds if $N$ is compact with $Ric^N_m <0$. Similar argument to the proof of Corollary \ref{coro-to-coro22} implies the following result.

\begin{corollary}\label{coro-to-coro31} Assume that $\dim_{\mathbb{C}}M=m \le n=\dim_{\mathbb{C}}N$. Let $(M, g)$ be a compact K\"ahler manifold such that $S^M \ge 0$. Let $(N^n, h)$ be a complete K\"ahler manifold such that $Riv^N_m(y)\le0$. For any non-degenerate holomorphic map $f:M\to N$,  $D$ must be a constant. Moreover $S^M \equiv 0$, and $Ric^N_m=0$ at least along a $m$-dimensional submanifold.
\end{corollary}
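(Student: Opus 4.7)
The plan is to mirror the argument for Corollary \ref{coro-to-coro22}, replacing the weighted operator $\mathcal{L}$ by the ordinary Laplacian $\Delta$, since the hypothesis here is on $S^M$ rather than on the full Ricci tensor. First, since $f$ is non-degenerate, the set $U=\{x\in M:\operatorname{rank}_{\mathbb{C}}\partial f(x)=m\}$ is nonempty and open; its complement $Z=\{D=0\}$ is a proper analytic subvariety, so $U$ is dense and connected, and $\log D$ is smooth on $U$.

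At any $x_0\in U$, take the normal coordinates of Lemma \ref{lemma1-2} at $x_0$ and $f(x_0)$, and trace the pointwise identity \eqref{Boch2} against the frame $v=\partial/\partial z^\gamma$, $\gamma=1,\dots,m$. Using $\partial f(\partial/\partial z^\gamma)=\lambda_\gamma\,\partial/\partial w^\gamma$ and $\sum_\gamma Ric^M(\partial/\partial z^\gamma,\partial/\partial\bar z^\gamma)=S^M$, this yields at $x_0$
\begin{equation*}
\Delta\log D \;=\;\sum_{\alpha,\gamma=1}^{m}\sum_{i=m+1}^{n}\frac{|f^i_{\alpha\gamma}|^2}{|\lambda_\alpha|^2}\;-\;\sum_{\gamma=1}^{m}|\lambda_\gamma|^2\,Ric^N_{\Sigma}\!\left(\tfrac{\partial}{\partial w^\gamma},\tfrac{\partial}{\partial\bar w^\gamma}\right)\;+\;S^M(x_0),
\end{equation*}
where $\Sigma=\partial f(T'_{x_0}M)\subset T'_{f(x_0)}N$ is the $m$-dimensional image. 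Under the hypotheses $Ric^N_m\le 0$ and $S^M\ge 0$ every summand on the right is nonnegative, so $\Delta\log D\ge 0$ on $U$.

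Since $M$ is compact, $D$ attains its (strictly positive) maximum at some $x_0\in U$. The strong maximum principle applied to the subharmonic function $\log D$ on the connected open set $U$ forces $\log D$ to be constant on $U$. If $Z$ were nonempty, $D$ would tend to $0$ at any boundary point of $U$, contradicting the constancy just established; hence $Z=\emptyset$, $U=M$, and $D$ is a positive constant on $M$. Consequently $\Delta\log D\equiv 0$ on $M$, so the three nonnegative terms on the right above vanish identically. Vanishing of the third term gives $S^M\equiv 0$. For the middle term, the sign condition $Ric^N_m\le 0$ and strict positivity of each $|\lambda_\gamma|^2$ force $Ric^N_{\Sigma_x}\!\left(\partial/\partial w^\gamma,\partial/\partial\bar w^\gamma\right)=0$ for every $\gamma$; as $\{\partial/\partial w^\gamma\}$ is a unitary basis of $\Sigma_x=T'_{f(x)}f(M)$ and a sign-definite Hermitian form with zero diagonal in a unitary frame is identically zero, we conclude $Ric^N_{\Sigma_x}\equiv 0$. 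Hence $Ric_m^N=0$ along the $m$-dimensional (immersed) complex submanifold $f(M)$.

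The one delicate point is the possible presence of the analytic locus $Z$ where $\log D$ is singular; this is handled by observing that holomorphicity and non-degeneracy of $f$ guarantee $Z$ is a proper analytic subvariety, which leaves $U$ connected and allows the strong maximum principle on $U$ to upgrade to constancy on all of $M$.
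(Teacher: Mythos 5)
Your proof is correct and follows essentially the same route the paper intends: trace the $\partial\bar\partial$-formula of Lemma \ref{lemma1-2} with the ordinary Laplacian (rather than the weighted operator $\mathcal{L}$ used for Corollary \ref{coro-to-coro22}), so that the curvature terms become $S^M$ and $\sum_\gamma|\lambda_\gamma|^2 Ric^N_{\Sigma}(\gamma,\bar\gamma)$, and then apply the strong maximum principle to $\log D$ on the non-degeneracy locus. Your explicit treatment of the degeneracy set $Z$ (showing it must be empty once $D$ is constant on its complement) and of the semidefinite-form argument for $Ric^N_{\Sigma}\equiv 0$ merely fills in details the paper leaves implicit.
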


By flipping the sign we  have the following consequences.

\begin{corollary}[Hoop Lemma-Volume]\label{coro-positive-2}Let $f:M\to N$ be a holomorphic map with $M$ being compact. Assume that  $Ric^M\ge K>0$.  Assume further that the $m$-Ricci of $N$,   $Ric^N_m(x)\le \kappa$ with $\kappa>0$. Then we have the  estimate
$$
\max_{x\in M} D^{1/m}(x)\ge \frac{K}{\kappa},
$$
provided that $f$  is non-degenerate.
\end{corollary}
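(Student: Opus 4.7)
Plan: I would mimic the argument of Corollary \ref{Coro-31}, but with the signs of both curvature hypotheses reversed, two adjustments are forced. First, the relevant extremum of $\log D$ is its maximum, which exists and is attained at some $x_0$ (with $D(x_0)>0$) because $f$ is non-degenerate and $M$ is compact. Second, applying the ordinary Laplacian $\Delta$ would leave one wanting AM--GM in the reverse direction, so I would instead use the pointwise elliptic operator $\mathcal{L}$ with reciprocal weights $1/|\lambda_\gamma|^2$ that was introduced in the proof of Theorem \ref{thm2}(i).

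At $x_0$ I would choose normal coordinates diagonalizing $\partial f$ as in Lemma \ref{lemma1-2}. Dropping the manifestly nonnegative first term on the right hand side of (\ref{Boch2}) and inserting the hypotheses $Ric^M\ge K$ and $Ric^N_m\le \kappa$ (the latter applied to the $m$-dimensional subspace of $T'_{f(x_0)}N$ spanned by $\{\partial/\partial w^\alpha\}_{\alpha=1}^m$, which contains the image of $\partial f$) should yield the pointwise inequality
$$\partial_\gamma \bar\partial_{\bar\gamma} \log D(x_0) \;\ge\; K - \kappa |\lambda_\gamma|^2 \quad \text{for each } \gamma=1,\ldots,m.$$
Dividing by $|\lambda_\gamma|^2$ and summing over $\gamma$ produces
$$\mathcal{L}\log D(x_0) \;\ge\; -m\kappa + K\sum_{\gamma=1}^m \frac{1}{|\lambda_\gamma|^2} \;\ge\; -m\kappa + \frac{mK}{D^{1/m}(x_0)},$$
where the second step is AM--GM applied to the reciprocals $1/|\lambda_\gamma|^2$ (noting that $\prod_\gamma |\lambda_\gamma|^2 = D$).

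Since $\log D$ has an interior maximum at $x_0$ and is smooth there, its complex Hessian is negative semidefinite, whence the positively weighted trace satisfies $\mathcal{L}\log D(x_0)\le 0$. Combining with the preceding lower bound forces $K/D^{1/m}(x_0)\le \kappa$, i.e., $\max_M D^{1/m} = D^{1/m}(x_0)\ge K/\kappa$.

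The main conceptual obstacle is the choice of operator: the Laplacian, which succeeded in Corollary \ref{Coro-31}, here produces $\sum |\lambda_\gamma|^2 \ge m D^{1/m}$ on the wrong side of the desired estimate. Once one recognizes that summing with the weights $1/|\lambda_\gamma|^2$ instead redirects AM--GM to act on $\sum 1/|\lambda_\gamma|^2$ in the correct direction, the remainder is essentially mechanical. I would also note that the stronger hypothesis $Ric^M\ge K$ (as opposed to the scalar-curvature bound used in Corollary \ref{Coro-31}) is needed precisely because the weighted operator requires a pointwise lower bound on each $Ric^M_{\gamma\bar\gamma}$ individually rather than merely on their sum.
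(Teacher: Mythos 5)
Your argument is correct: at an interior maximum $x_0$ of $D$ (which is positive there since $f$ is non-degenerate and $M$ compact, so Lemma \ref{lemma1-2} applies), the weighted trace $\mathcal{L}\log D\le 0$, and your chain $\mathcal{L}\log D(x_0)\ge K\sum_\gamma|\lambda_\gamma|^{-2}-m\kappa\ge mK D^{-1/m}(x_0)-m\kappa$ (AM--GM on the reciprocals, using $D=\prod_\gamma|\lambda_\gamma|^2$ at $x_0$) indeed forces $D^{1/m}(x_0)\ge K/\kappa$; the identification of $\sum_\alpha R^N(\alpha,\bar\alpha,\partial f(v),\overline{\partial f(v)})$ with $Ric^N_m$ evaluated on the image subspace is also the right way to use the hypothesis. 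The paper's own proof is the same in spirit (Lemma \ref{lemma1-2} at the maximum of $D$) but exploits the second-order information differently and more cheaply: instead of tracing with the weights $1/|\lambda_\gamma|^2$ and invoking AM--GM, it simply evaluates the complex Hessian of $\log D$ in the \emph{single} unit direction $v$ minimizing $|\partial f(v)|$, obtaining $0\ge K-\kappa\inf_{|v|=1}|\partial f(v)|^2$, i.e.\ $\min_\gamma|\lambda_\gamma|^2(x_0)\ge K/\kappa$, which is strictly stronger than the stated conclusion (every singular value, not just the geometric mean, is bounded below) and then gives $D^{1/m}(x_0)\ge K/\kappa$ immediately. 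So your route buys nothing extra here, but it is self-contained and consistent with the $\mathcal{L}$-machinery of Theorem \ref{thm2}(i); your closing observation that the plain Laplacian only controls $\sum_\gamma|\lambda_\gamma|^2$ (hence $\|\partial f\|^2$, not $D^{1/m}$) and that a genuine Ricci lower bound, not merely a scalar-curvature bound as in Corollary \ref{Coro-31}, is needed, is accurate for both proofs.
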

\begin{proof} At the maximum point of $D$, say $x_0$, apply Lemma \ref{lemma1-2} as before. Pick $v$ to be the unit direction such that $|\partial f(v)|$ is the smallest. Then maximum principle implies that
$0\ge -\kappa \inf_{v, |v|=1} |\partial f(v)|^2 +K.$ The claimed result follows easily.
\end{proof}
We should remark that a similar result can be obtained (with the same argument) for the harmonic maps between two Riemannian manifolds. Namely if $u: M\to N$ is a harmonic map between two compact Riemannian manifolds. Assume that the sectional curvature of $N$ is bounded from above by $\kappa$ and $Ric^M\ge K$ with $\kappa, K>0$. Then for nonconstant map $u$
\begin{equation}\label{eq:hoop-harmonic}
\max_{x\in M} \|du\|^2 (x) \ge \frac{K}{\kappa}.
\end{equation}
 The corollary above has the advantage that when  the volume (or the stretching of the volume forms)  is concerned {\it only the Ricci curvatures} of both the target and domain manifolds are involved. The result for the harmonic maps is less satisfying since it involves bound of two different type of curvatures.

To prove part (iii) of Theorem \ref{thm2}, we need the following result from \cite{Ni-JDG}:

{\it Let $(M, g)$ be a complete K\"ahler manifold with $Ric^M \ge 0$. Let $u(x)$ be a plurisubharmonic function on $M$ satisfying that
$$
\lim_{x\to \infty} u(x)=o\left(\log(r(x))\right)
$$
then $\left(\sqrt{-1} \partial \bar{\partial} u\right)^m\equiv 0$.}

Now let $u(x)=\log D(x)$. Lemma \ref{lemma1-2} implies that $u(x)$ is a plurisubharmonic function, and at the point where $D>0$, it is strictly plurisubharmonic due to that $Ric_m^N<0$. On the other hand the growth assumption in part (iii) of Theorem \ref{thm2} implies that $u(x)=o(\log(r(x)))$. Hence $\left(\sqrt{-1}\partial \bar{\partial} u\right)^m\equiv 0$. This is a contradiction at the point $x$ with $D(x)>0$. The contradiction shows that $D(x)\equiv 0$, namely $f$ is degenerate.

\section{Extensions}

In this section we extend the proofs in the previous section to obtain the following result towards the question (Q) raised in the introduction.

\begin{theorem}\label{thm41} Assume that $\dim_{\mathbb{C}}M=m \le n=\dim_{\mathbb{C}}N$. (i) Let $(M, g)$ be a complete K\"ahler manifold such that the holomorphic  bisectional curvature  is bounded from below by  $-K_1$ for some $K_1>0$. Let $(N^n, h)$ be a compact K\"ahler manifold such that $S_m(y)\le -\kappa<0$.  Let $\{|\lambda_\gamma|^2(x)\}$ be the singular values of $\partial f$ at $T_x'M$. Assume further that $Ric^M \ge -K$ and $D$ is bounded from above, then
\begin{equation}\label{eq:41}
\limsup_{x\to \infty}(\min_{\gamma} |\lambda_\gamma|^2(x))\le \frac{mK}{\kappa}.
\end{equation}
In particular if additionally $Ric^M \ge 0$  then  $f:M\to N$ must be asymptotically degenerate in the sense that
$$
\limsup_{x\to \infty} (\min_{1\le\gamma\le m} |\lambda_\gamma|^2(x))=0.
$$

(ii) Let $(M, g)$ be a K\"ahler manifold. Assume that for $R>0$, the holomorphic bisectional curvature of $M$ is bounded from below by $-K_1$ for some $K_1>0$ in $B_p(R)$. Let $(N^n, h)$ be a compact K\"ahler manifold such that $S_m(y)<-\kappa<0$.  Let $f:M\to N$ be  a holomorphic map. Let $\{|\lambda_\gamma|^2(x)\}$ be the singular values of $\partial f$ at $T_x'M$. Let  $\sigma_{m-1}(\lambda)$ be the  $(m-1)$-th symmetric function  of the singular values $\{|\lambda_\gamma|^2\}$. Assume further that $Ric^M \ge -K$. Then we have
\begin{equation}\label{eq:42}
\sup_{B_p(\frac{R}{2})} D \le \frac{mK}{\kappa}\sup_{B_p(R)} \sigma_{m-1}+\left(\frac{C_1}{R^2}+\frac{C_1}{ R}\left(C(m)\left(\frac{1}{R}+\sqrt{K_1}\right)\right)\right) \frac{\sup_{B_p(R)}\sigma_{m-1}}{\kappa}.
\end{equation}
Here $C_1>0$ is an absolute constant.
If furthermore $(M^m, g)$ is complete and has nonnegative bisectional curvature and $\sigma_{m-1}$ satisfies  that
 $$
 \limsup_{x\to \infty} \frac{\sigma_{m-1}(x)}{r^2(x)}=0.
 $$
 Then $f$ must be degenerate.
\end{theorem}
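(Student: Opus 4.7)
The plan is to combine Lemma \ref{lemma1-2} with the weighted elliptic operator
\[
\mathcal{L} \,=\, \sum_{\gamma=1}^m \frac{1}{2|\lambda_\gamma|^2}\bigl(\nabla_\gamma \nabla_{\bar\gamma} + \nabla_{\bar\gamma}\nabla_\gamma\bigr)
\]
introduced in the proof of Theorem \ref{thm2}(i). At any point where $D>0$, pick the normal coordinates of Lemma \ref{lemma1-2} diagonalizing $\partial f$, apply the Bochner identity with $v=\partial/\partial z^\gamma$, weight by $1/|\lambda_\gamma|^2$ and sum in $\gamma$. Since $\partial f(\partial/\partial z^\gamma)=\lambda_\gamma\partial/\partial w^\gamma$, the factors $|\lambda_\gamma|^2$ cancel in the $R^N$ term, which collapses to $\sum_{\alpha,\gamma=1}^m R^N_{\alpha\bar\alpha\gamma\bar\gamma}=S_m(f(x),\Sigma)$, the $m$-scalar curvature of $\Sigma:=df(T'_xM)$. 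Together with the identity $\sum_\gamma|\lambda_\gamma|^{-2}=\sigma_{m-1}/D$ and the hypotheses $S^N_m\le-\kappa$ and $Ric^M\ge-K$, this yields the basic pointwise inequality
\begin{equation}\label{eq:bdd-key}
\mathcal{L}\log D \,\ge\, \kappa \,-\, K\,\frac{\sigma_{m-1}}{D}.
\end{equation}

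For part (i) I argue by contradiction. If $\limsup_{x\to\infty}(\min_\gamma |\lambda_\gamma|^2(x))> mK/\kappa$, then for some $\delta>0$ there are points $y_k\to\infty$ with $\min_\gamma |\lambda_\gamma|^2 \ge mK/\kappa + \delta$; by continuity this persists on neighborhoods $U_k$, and boundedness of $D$ also gives an a priori upper bound for $|\lambda_\gamma|^2$ in $U_k$. Hence $\mathcal{L}$ is uniformly elliptic on $U_k$ and \eqref{eq:bdd-key} forces $\mathcal{L}\log D \ge \eta_\delta>0$ there. Since $\log D$ is bounded above on $M$, the Omori--Yau maximum principle (available under $Ric^M\ge-K_1$) produces a sequence $x_j$ with $\log D(x_j)\to\sup\log D$ and $\mathcal{L}\log D(x_j)\to 0$; localizing Omori--Yau by subtracting $\epsilon r^2$ to concentrate the almost-maximum near $U_k$ and then letting $\epsilon\to 0$ contradicts $\eta_\delta>0$ and yields (\ref{eq:41}); the special case $K=0$ gives the stated asymptotic degeneracy.

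For part (ii) I would follow the Cheng--Yau cutoff scheme, as in the proof of Corollary \ref{Coro-31}. Choose a smooth radial cutoff $\eta(r)$ supported in $B_p(R)$, equal to $1$ on $B_p(R/2)$, with $|\eta'|\le C/R$ and $|\eta''|\le C/R^2$, and examine $G=\eta^2D$ at an interior maximum $x_0$. Applying $\mathcal{L}$ to $\log G=2\log\eta+\log D$, the $\log D$ contribution is bounded below by \eqref{eq:bdd-key}, while the $2\log\eta$ contribution is controlled by the Hessian/Laplacian comparison theorem (using bisectional curvature $\ge-K_1$ inside $B_p(R)$), producing a cutoff term of order $\bigl(C_1/R^2+C_1C(m)(1/R+\sqrt{K_1})/R\bigr)\sum|\lambda_\gamma|^{-2}$. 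From $\mathcal{L}\log G\le 0$ at $x_0$, multiplying through by $D(x_0)$ and rearranging gives
\[
\kappa\,D(x_0)\le K\,\sigma_{m-1}(x_0)+\Bigl(\frac{C_1}{R^2}+\frac{C_1 C(m)}{R}\bigl(\tfrac{1}{R}+\sqrt{K_1}\bigr)\Bigr)\sigma_{m-1}(x_0),
\]
which on replacing $\sigma_{m-1}(x_0)$ by $\sup_{B_p(R)}\sigma_{m-1}$ and $D(x_0)$ by $\sup_{B_p(R/2)}D$ yields (\ref{eq:42}). The main technical obstacle is matching the weighted coefficients $|\lambda_\gamma|^{-2}$ with the right side of \eqref{eq:bdd-key}: choosing the cutoff radially forces $\mathcal{L}\log\eta$ to factor through $\sum|\lambda_\gamma|^{-2}=\sigma_{m-1}/D$ up to bounded derivatives of $\eta$, which is what makes the constants $C_1,C(m)$ clean. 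Finally, with $Ric^M\ge0$ we may set $K=K_1=0$ in (\ref{eq:42}) to obtain $\sup_{B_p(R/2)}D\le (C_2(m)/(\kappa R^2))\sup_{B_p(R)}\sigma_{m-1}$; the growth hypothesis $\sigma_{m-1}(x)=o(r^2(x))$ gives $\sup_{B_p(R)}\sigma_{m-1}=o(R^2)$, so sending $R\to\infty$ forces $D\equiv 0$, i.e., $f$ is degenerate.
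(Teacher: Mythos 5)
Your part (ii) is essentially the paper's own argument: the Cheng--Yau cutoff applied to $\log(\varphi D)$, the weighted operator $\mathcal{L}$, the Li--Wang complex Hessian comparison for the cutoff term (this is where the bisectional lower bound $-K_1$ enters), and multiplication by $D$ at the maximum point to convert $\sum_\gamma|\lambda_\gamma|^{-2}$ into $\sigma_{m-1}/D$; the concluding $R\to\infty$ step under nonnegative bisectional curvature and $\sigma_{m-1}=o(r^2)$ is also as in the paper. Likewise your pointwise inequality $\mathcal{L}\log D\ge \kappa-K\,\sigma_{m-1}/D$ (at points where $D>0$) is exactly the input used for part (i).

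The gap is in your part (i). The contradiction hypothesis gives the strict inequality $\mathcal{L}\log D\ge \eta_\delta>0$ only on the sets $U_k$, small neighborhoods of points $y_k\to\infty$ where $\min_\gamma|\lambda_\gamma|^2\ge mK/\kappa+\delta$; but an Omori--Yau almost-maximum sequence for $\log D$ has no reason to enter the $U_k$: the supremum of $D$ can be approached in regions where the smallest singular value is tiny, and there the strict positivity you need is simply absent, so no contradiction results. Subtracting $\epsilon r^2$ does not ``localize near $U_k$'' --- it pushes almost-maxima toward the base point (or wherever $D$ is large), not toward the $U_k$. A second, more technical, problem is the assertion $\mathcal{L}\log D(x_j)\to 0$: Omori--Yau controls the complex Hessian by $\epsilon_j g$, so pairing with the weights only gives $\mathcal{L}\log D(x_j)\le \epsilon_j\sum_\gamma|\lambda_\gamma|^{-2}(x_j)$, and off the sets $U_k$ you have no bound on $\sum_\gamma|\lambda_\gamma|^{-2}$. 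The paper avoids both issues by not arguing by contradiction at all: apply Omori directly to $\log D$ at a sequence $x_j$ with $D(x_j)\to\sup_M D>0$ (here is where the hypothesis that $D$ is bounded is used), insert the almost-nonpositivity of the complex Hessian into the weighted sum from Lemma \ref{lemma1-2}, and absorb the error into the Ricci term, getting $\kappa\le (K+\epsilon_j)\sum_\gamma|\lambda_\gamma|^{-2}(x_j)\le m(K+\epsilon_j)/\min_\gamma|\lambda_\gamma|^2(x_j)$, hence $\min_\gamma|\lambda_\gamma|^2(x_j)\le m(K+\epsilon_j)/\kappa$ along that sequence; this is how \eqref{eq:41} and, with $K=0$, the asymptotic degeneracy are obtained. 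Note that this method yields the bound along the near-supremum sequence of $D$; your scheme is in effect trying to force the bound along an arbitrary sequence tending to infinity, which is precisely what a maximum-principle argument of this type cannot reach, since the differential inequality is only exploitable at (almost) maximum points of $D$.
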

\begin{proof} To prove part (i), we apply the maximum principle of \cite{Omori} at infinity.  By the virtue of \cite{Omori} we have a sequence of points $x_k\to \infty$ such that $\lim_{k\to \infty} D(x_k)\to \sup_M D$, which we may assume without the loss of generality being positive,  and
$$
\lim_{k\to \infty} \left. \langle \sqrt{-1}\partial \bar{\partial} \log D, \frac{1}{\sqrt{-1}}v\wedge \bar{v}\rangle\right|_{x_k} \le 0.
$$
Applying Lemma \ref{lemma1-2}, if  denoting the lower bound of the Ricci curvature (of $M$)  by $-K$  we have that
$$
\limsup_{k\to \infty} \left(\kappa -K\sum_{\gamma=1}^m \frac{1}{|\lambda_\gamma|^2(x_k)}\right) \le 0.
$$
This implies that
$$
\limsup_{k\to \infty}(\min_{\gamma} |\lambda_\gamma|^2(x_k))\le \frac{mK}{\kappa}.
$$
This proves (\ref{eq:41}), which implies the rest of part (i).

To prove part (ii), let $\eta(t):[0, +\infty)\to [0, 1]$ be a function supported in $[0, 1]$ with $\eta'=0$ on $[0, \frac{1}{2}]$, $\eta' \le 0$, $\frac{|\eta'|^2}{\eta}+(-\eta'')\le C_1$. The construction of such $\eta$ is elementary.
Let $\varphi_R(x)=\eta(\frac{r(x)}{R})$.  When the meaning is clear we omit subscript $R$ in $\varphi_R$. Clearly $D\cdot \varphi$ attains a maximum somewhere at $x_0$ in $B_p(R)$.
Now we apply $\mathcal{L}$ to  $\log (D\varphi)$  at the maximum point $x_0$ (where $D\cdot\varphi$ also attains its maximum).
The first derivatives vanish at $x_0$, which implies
$$\nabla_\gamma D(x_0) =-D(x_0)\left.\frac{\eta'(\frac{r(x)}{R})}{R\eta(\frac{r(x)}{R}) } \nabla_\gamma r(x)\right|_{x_0};\quad    \nabla_{\bar{\gamma}}D(x_0) =-D(x_0)\left.\frac{\eta'(\frac{r(x)}{R})}{R\eta(\frac{r(x)}{R}) } \nabla_{\bar{\gamma}} r(x)\right|_{x_0}.
$$
Applying Lemma \ref{lemma1-2},  we have that  at $x_0$ (where we may assume $D\varphi>0$),
\begin{eqnarray*}
0&\ge& \mathcal{L}\log (D\varphi) \ge \kappa-K\sum_{\gamma}\frac{1}{|\lambda_\gamma|^2}+\mathcal{L} \log \varphi \\
&=& \kappa-K\sum_{\gamma}\frac{1}{|\lambda_\gamma|^2}+\frac{\eta''}{R^2\varphi} \sum_{\gamma}\frac{|\nabla_\gamma r(x)|^2}{|\lambda_\gamma|^2}+\frac{\eta'}{2R \varphi}\sum_{\gamma}\frac{\nabla^2_{\gamma\bar{\gamma}}r(x)+\nabla^2_{\bar{\gamma}\gamma} r(x)}{|\lambda_\gamma|^2}\\
&\quad&-\frac{|\eta'|^2}{\varphi^2 R^2}\sum_{\gamma}\frac{|\nabla_\gamma r(x)|^2}{|\lambda_\gamma|^2}\\
&\ge& \kappa -K\sum_{\gamma}\frac{1}{|\lambda_\gamma|^2}-\frac{C_1}{\varphi R^2} \sum_{\gamma}\frac{1}{|\lambda_\gamma|^2}-\frac{C_1}{\varphi R} \sum_{\gamma}\frac{C(m)(\frac{1}{R}+\sqrt{K_1})}{|\lambda_\gamma|^2} -\frac{|\eta'|^2}{R^2 \eta \varphi}\sum_{\gamma}\frac{1}{|\lambda_\gamma|^2}.
\end{eqnarray*}
 In the last line above we have used the complex Hessian comparison theorem of \cite{LW}. Now multiplying $D\varphi$ on both side of the estimate above we have at $x_0$
$$
0\ge D\cdot \varphi \kappa -m \varphi K \sigma_{m-1}-\frac{C_1}{R^2}\sigma_{m-1}-\frac{C_1}{ R}\left(C(m)\left(\frac{1}{R}+\sqrt{K_1}\right)\right)\sigma_{m-1}.
$$
From this we have that
$$
\sup_{B_p(\frac{R}{2})} D \le \frac{mK}{\kappa}\sup_{B_p(R)} \sigma_{m-1}+\left(\frac{C_1}{R^2}+\frac{C_1}{ R}\left(C(m)\left(\frac{1}{R}+\sqrt{K_1}\right)\right)\right)\frac{\sup_{B_p(R)} \sigma_{m-1}}{\kappa}.
$$
This proves (\ref{eq:42}). In the above estimate,  letting $K=0$, then letting $R\to \infty$, noting that
$\lim_{R\to \infty} \frac{\sup_{B_p(R)} \sigma_{m-1}}{R^2}=0$,
 we have the rest of the claim in part (ii). Here we have used the complex Hessian comparison result assuming   the bisectional curvature lower bound \cite{LW}.
\end{proof}

It is clear from the proof that  part (i) of Theorem \ref{thm41} holds if $Ric\ge -K$ outside a compact domain, or even only
$$
\liminf_{x\to \infty} Ric^M(x)\ge -K.
$$
In particular if  $
\liminf_{x\to \infty} Ric^M(x)\ge 0
$, we  have
$$
\limsup_{x\to \infty} (\min_{\gamma} |\lambda_\gamma|^2(x))=0.
$$
For part (ii), if we choose $\eta$ carefully, the following estimate can be proved: If $Ric^M \ge -K$, the bisectional curvature is bounded from below by $-K_1$ outside $B_p(R_0)$ for some $R_0>0$ then
\begin{eqnarray}\label{eq:43}
\sup_{B_p(\frac{R}{2})\setminus B_p(R_0)} D &\le& \frac{mK}{\kappa}\sup_{B_p(R)} \sigma_{m-1}+\left(\frac{C_1}{R^2}+\frac{C_1}{ R}\left(C(m)\left(\frac{1}{R}+\sqrt{K_1}\right)\right)\right) \frac{\sup_{B_p(R)}\sigma_{m-1}}{\kappa}\nonumber\\
&\quad& +\frac{C_2}{R}\sup_{B_p(R_0)}\sigma_{m-1}.
\end{eqnarray}
Here $C_1$ is an absolute constant, and $C_2=C_2(R_0)$.

A similar localization procedure also implies the following estimate:
\begin{corollary}\label{Coro:42} Let $R>0$ be a constant. Assume that scalar curvature $S^M(x)\ge -K$, and $Ric^M \ge -K_1$ in $B_p(R)$,  and  that the $k$-Ricci of $N$  $Ric^N_k(x)\le -\kappa$, then we have the  estimate
\begin{equation}\label{eq:44}
\sup_{B_p(\frac{R}{2})} mD^{1/m}\le \frac{K}{\kappa}+\frac{1}{\kappa}\left(\frac{C_1}{R^2}+\frac{C_1}{ R}\left(C(m)\left(\frac{1}{R}+\sqrt{K_1}\right)\right)\right).
\end{equation}
Here $C_1$ is an absolute constant.
\end{corollary}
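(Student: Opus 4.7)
The plan is to run the maximum principle argument of Corollary \ref{Coro-31} inside the ball $B_p(R)$ by multiplying $D^{1/m}$ by the square of a radial cutoff, the power $\varphi^2$ being chosen precisely so that the singular term $|\nabla\varphi|^2/\varphi^2$ appearing in $\Delta\log\varphi$ is cancelled by a $\varphi^2$ prefactor at the maximum point. All computations below are performed at points where $\partial f$ has rank $m$; we may assume $D>0$ somewhere in $B_p(R/2)$, else there is nothing to prove.

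First I would reproduce, from Lemma \ref{lemma1-2} together with $S^M\ge -K$, $Ric^N_m\le -\kappa$, and the arithmetic–geometric mean applied to the singular values $\{|\lambda_\gamma|^2\}$, the pointwise differential inequality
$$
\Delta \log D \;\ge\; \kappa \sum_{\gamma=1}^{m} |\lambda_\gamma|^2 - K \;\ge\; m\kappa\, D^{1/m} - K,
$$
exactly as in Corollary \ref{Coro-31}. Next I would pick the standard cutoff $\eta:[0,\infty)\to[0,1]$ with $\eta\equiv 1$ on $[0,1/2]$, $\operatorname{supp}\eta\subset[0,1]$, $\eta'\le 0$, and $|\eta'|^2/\eta + (-\eta'') \le C_1$, and set $\varphi(x)=\eta(r(x)/R)$ and $G := D^{1/m}\varphi^2$. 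The function $G$ is continuous on $\overline{B_p(R)}$ and vanishes on $\partial B_p(R)$, so it attains its maximum at some interior point $x_0$; Calabi's trick handles the cut locus of $r$ in the standard way.

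At $x_0$ the inequality $\Delta \log G \le 0$ gives $\frac{1}{m}\Delta\log D \le -2\Delta\log\varphi$, and combining with the Bochner inequality above, then multiplying through by $\varphi(x_0)^2$, produces
$$
m\kappa\, D(x_0)^{1/m}\,\varphi(x_0)^2 \;\le\; K\,\varphi(x_0)^2 \;+\; 2m\bigl(|\nabla\varphi|^2 - \varphi\,\Delta\varphi\bigr)(x_0),
$$
where the key identity $-\varphi^2\Delta\log\varphi = |\nabla\varphi|^2-\varphi\Delta\varphi$ eliminates the singular $1/\varphi$ factors. A direct calculation using the properties of $\eta$, $|\nabla r|\le 1$, and the Laplacian comparison theorem (which under $Ric^M\ge -K_1$ yields $\Delta r \le C(m)(1/R+\sqrt{K_1})$ on $\{r\ge R/2\}$, the only region where $\eta'\ne 0$) bounds the error term by $C_1/R^2 + (C_1/R)\,C(m)(1/R+\sqrt{K_1})$, after relabelling $C_1$.

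Finally, since $\varphi\equiv 1$ on $B_p(R/2)$, the maximality $G(x)\le G(x_0)$ for such $x$ reads $D(x)^{1/m}\le D(x_0)^{1/m}\varphi(x_0)^2$; dividing the preceding display by $\kappa$ and absorbing the harmless factor $2m$ into $C_1$ yields (\ref{eq:44}). I expect the main subtlety to be the choice of the exponent of $\varphi$ in the auxiliary function $G$, since any power strictly less than $2$ leaves an unbounded $1/\varphi$ residue after multiplication, while a higher power would weaken the resulting estimate; once this cancellation is arranged, the Laplacian-comparison calculation of the error is routine.
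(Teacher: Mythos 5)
Your argument is correct and is essentially the paper's intended ``localization procedure'': the traced inequality $\Delta\log D \ge m\kappa D^{1/m}-K$ coming from Lemma \ref{lemma1-2} with $S^M\ge -K$, $Ric^N_m\le-\kappa$ and AM--GM, a radial cutoff $\varphi=\eta(r/R)$, the maximum principle at an interior maximum (with Calabi's trick), and the Laplacian comparison theorem under $Ric^M\ge -K_1$, which is exactly why only a Ricci lower bound is assumed here rather than the bisectional bound needed for the complex Hessian comparison elsewhere in the paper. The only cosmetic deviations are your use of $\varphi^2$ --- the paper's cutoff already satisfies $|\eta'|^2/\eta\le C_1$, so the first power of $\varphi$ suffices and the ``subtlety'' you flag is not actually forced --- and the factor $2m$ you absorb into $C_1$, which strictly makes that constant dimension-dependent but is harmless since $C(m)$ already appears in (\ref{eq:44}).
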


For any $p$ one may define the {\it lower Ricci curvature radius} (abbreviated as $r^{l}_{Ric}(p)$) as the biggest $R$ such that $Ric\ge -\frac{1}{R^2}$ in $B_p(R)$.
For  $R=r^{l}_{Ric}$ the estimate simplifies into the form:
\begin{equation}\label{eq:45}
\sup_{B_p(\frac{R}{2})} mD^{1/m}\le \frac{K}{\kappa}+\frac{1}{\kappa}\frac{C_1}{R^2}.
\end{equation}

\section{ A new Scharwz Lemma}
In this section we prove Theorem \ref{thm:sch}. We start with a linear algebraic lemma.

\begin{lemma}\label{lemma:51} Let $A$ be a Hermitian symmetric matric which is semi-positive. Let $G$ be positive  Hermitian symmetric matrix. We denote $(G^{\alpha \bar{\beta}})$ the inverse of $G$. Then for any $s$
\begin{equation}\label{eq:51}
\sup_{v\ne 0} \frac{\langle A(v), \bar{v}\rangle}{\langle G(v), \bar{v}\rangle} \ge \frac{G^{s\bar{\beta}} A_{\alpha \bar{\beta}}G^{\alpha \bar{s}}}{G^{s\bar{s}}}\ge \inf_{v\ne 0} \frac{\langle A(v), \bar{v}\rangle}{\langle G(v), \bar{v}\rangle}
\end{equation}
\end{lemma}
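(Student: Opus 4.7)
The plan is to recognize that the middle quantity in inequality~(\ref{eq:51}) is nothing but the generalized Rayleigh quotient $\frac{\langle A(v),\bar v\rangle}{\langle G(v),\bar v\rangle}$ evaluated at one specific nonzero vector, namely the $s$-th column of $G^{-1}$. Once that identification is made, the double inequality is automatic.

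First I would set $v^\alpha := G^{\alpha\bar s}$ (with $s$ fixed) and verify two index calculations. Since $G$ is Hermitian positive definite, so is its inverse; thus $G^{\alpha\bar\beta}=\overline{G^{\beta\bar\alpha}}$, and in particular $\overline{v^\beta}=\overline{G^{\beta\bar s}}=G^{s\bar\beta}$. The identity $\sum_\gamma G_{\gamma\bar\beta}G^{s\bar\gamma}=\delta^s_\beta$ (the matrix relation $G^{-1}G=I$) then gives
\begin{equation*}
\langle G(v),\bar v\rangle=G_{\alpha\bar\beta}G^{\alpha\bar s}G^{s\bar\beta}=\delta^s_\beta G^{s\bar\beta}=G^{s\bar s},
\end{equation*}
and clearly
\begin{equation*}
\langle A(v),\bar v\rangle=A_{\alpha\bar\beta}G^{\alpha\bar s}G^{s\bar\beta}=G^{s\bar\beta}A_{\alpha\bar\beta}G^{\alpha\bar s}.
\end{equation*}
Hence the middle quantity equals $\langle A(v),\bar v\rangle/\langle G(v),\bar v\rangle$ for this particular $v$.

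Second, I would verify that $v\ne 0$ so that this ratio is an admissible value in the sup/inf over nonzero vectors. Positive definiteness of $G^{-1}$ forces $G^{s\bar s}>0$, which rules out $v=0$ (if $v$ were zero then $G^{s\bar s}=\langle G(v),\bar v\rangle=0$, a contradiction). Consequently, the middle expression is a legitimate element of the set $\{\langle A(w),\bar w\rangle/\langle G(w),\bar w\rangle : w\ne 0\}$, whence it lies between the infimum and the supremum, which is exactly~(\ref{eq:51}).

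This is really a one-line observation, so there is no serious obstacle; the only point to be careful about is the index bookkeeping (distinguishing the convention $G^{-1}G=I$ from $GG^{-1}=I$ when indices are written with bars) and noting that semi-positivity of $A$ is not even needed for the algebraic identity—only for guaranteeing that the sup and inf are finite and nonnegative. The lemma will then be applied in the Schwarz lemma proof with $A_{\alpha\bar\beta}=\partial f(\tfrac{\partial}{\partial z^\alpha})\cdot \overline{\partial f(\tfrac{\partial}{\partial z^\beta})}$ (in the target metric $h$) and $G=g$, allowing one to replace $\|\partial f\|_m^2=\sup_v\langle A(v),\bar v\rangle/\langle g(v),\bar v\rangle$ by a quantity amenable to the viscosity/maximum principle argument.
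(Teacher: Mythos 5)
Your proof is correct and rests on the same key observation as the paper's: the middle quantity in (\ref{eq:51}) is the Rayleigh quotient $\langle A(v),\bar v\rangle/\langle G(v),\bar v\rangle$ evaluated at the nonzero vector $v=G^{-1}(E_s)$ (the $s$-th column of $G^{-1}$), so it automatically lies between the infimum and the supremum. The only cosmetic difference is that the paper verifies this identification via factorizations $A=\bar a^t\cdot a$, $G=\bar g^t\cdot g$, whereas you check it by a direct index computation using $\overline{G^{\beta\bar s}}=G^{s\bar\beta}$ and $G^{-1}G=I$, which is equally valid and somewhat more economical.
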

\begin{proof} By linear algebra, there exists $a$ and $g$ such that $A=\overline{a}^{t} \cdot a$ and $G=\overline{g}^{t} \cdot g$. The positivity of $G$ implies that $g$ is non-singular.
Let $\{E_\gamma\}$ be $\{\frac{\partial }{\partial z^\gamma}\}$. Now the middle term can be expressed as
$$
\frac{ \langle G^{-1}A G^{-1} (E_s), \overline{E_s}\rangle}{\langle G^{-1}(E_s), \overline{E}_s\rangle}=\frac{ \langle \bar{a}^t \cdot a \cdot g^{-1} \cdot (\bar{g}^t)^{-1} (E_s), \overline{g^{-1}\cdot  (\bar{g}^t)^{-1} (E_s)}\rangle}{\langle (\bar{g}^t)^{-1} (E_s), \overline{(\bar{g}^t)^{-1}(E_s)}\rangle}.
$$
Let $w=(\bar{g}^t)^{-1}(E_s)$ and  $v=g^{-1}(w)$ (which is clearly nonzero). Then the right hand side can be written as
$$
\frac{\langle \bar{a}^t \cdot a \cdot g^{-1}(w), \overline{g^{-1}(w)}\rangle}{|w|^2}=\frac{\langle a(v), \overline{a(v)}\rangle}{|g(v)|^2}=\frac{\langle A(v), \overline{v}\rangle}{\langle G(v), \overline{v}\rangle}.
$$
 Now the  claimed result becomes obvious.
\end{proof}

Now we prove Theorem \ref{thm:sch}.
 Let $\eta$ and $\varphi$ be the cut-off functions as in the last section. We consider $\|\partial f\|_m^2 \varphi$. It must attain a maximum somewhere in $B_p(R)$, say at $x_0$. Now we pick  normal coordinates $(z_1, \cdots, z_m)$ centered at  $x_0$,  and $(w_1, \cdots, w_n)$ centered at $f(x_0)$ as before. Let $A=(A_{\alpha\bar{\beta}})$ locally  with
$$
A_{\alpha \bar{\beta}}(x)=f^{i}_{\alpha}(x) h_{i\bar{j}}(f(x)) \overline{f^{j}_\beta}(x).
$$
By unitary changes of frame of  $T'_{x_0}M$ and $T'_{f(x_0)}N$ we can assume that $f^i_\alpha =\delta_{i\alpha} \lambda_\alpha$ at $x_0$. We may also assume that
$$
\|\partial f\|_m^2(x_0)= |\lambda_1|^2\ge |\lambda_2|^2 \ge \cdots\ge |\lambda_m|^2.
$$
Now let
$$
 W(x)=\frac{g^{1\bar{\beta}}(x) A_{\alpha \bar{\beta}}(x)g^{\alpha \bar{1}}(x)}{g^{1\bar{1}}(x)}.
$$
By the choice of the normal coordinates specified as above we have that $W(x_0)=|\lambda_1|^2=\|\partial f\|_m^2(x_0)$.
The above lemma implies that $W(x)\le \|\partial f\|^2_m (x)$ for $x$ in the neighborhood of $x_0$. Hence $W(x) \cdot \varphi (x)$ still attains a local maximum at $x_0$, which is the same as $\|\partial f\|^2_m \cdot \varphi$ at $x_0$. In the terminology of viscosity solutions, $W(x)$ serves a smooth barrier for $\|\partial f\|^2_m(x)$.
We shall apply the maximum principle to $\log (W(x) \cdot \varphi (x))$.  For that we need another $\partial \bar{\partial}$-lemma.

\begin{lemma}\label{lemma:52} Under the above notations, at $x_0$, or at any point with the normal coordinates specified as above,
\begin{equation}\label{eq:52}
\langle \sqrt{-1}\partial \bar{\partial} \log W, \frac{1}{\sqrt{-1}}v\wedge \bar{v}\rangle = R^M_{1\bar{1}v\bar{v}}-R^N_{1\bar{1} \partial f(v) \overline{\partial f(v)}}+\frac{\sum_{i\ne 1}|f^i_{1v}|^2}{W}.
\end{equation}
\end{lemma}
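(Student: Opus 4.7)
The plan is to write $\log W = \log N - \log D$ with $N(x) = g^{1\bar\beta}(x) A_{\alpha\bar\beta}(x) g^{\alpha\bar 1}(x)$ and $D(x) = g^{1\bar 1}(x)$, compute $\p\bar\p$ of each summand at $x_0$, and exploit the fact that in the normal coordinates chosen at $x_0$ and $f(x_0)$ all first derivatives of $g_{\alpha\bar\beta}$, $g^{\alpha\bar\beta}$, and $h_{i\bar j}$ at the relevant basepoints vanish, while the mixed second derivatives collapse to curvature: $\p_\gamma\p_{\bar\delta}g^{1\bar 1}(x_0) = R^M_{1\bar 1\gamma\bar\delta}$ and $h_{i\bar j, k\bar l}(f(x_0)) = -R^N_{i\bar j k\bar l}$.

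First I would compute $\p_\gamma N(x_0)$ and $\p_{\bar\delta} N(x_0)$. In the Leibniz expansion every term carrying a lone $\p g^{\cdot\bar\cdot}$ vanishes, and inside $\p_\gamma A_{1\bar 1}$ every term carrying a lone $h_{i\bar j, k}(f)$ vanishes; together with holomorphicity of $f$ (which kills $\p_\gamma\overline{f^j_1}$) and the normalization $f^i_1 = \lambda_1\delta_{i1}$, this leaves $\p_\gamma N(x_0) = \p_\gamma A_{1\bar 1}(x_0) = f^1_{1\gamma}\bar\lambda_1$ and its conjugate. Hence the first-order quotient contribution to $\p_\gamma\p_{\bar\delta}\log N$ at $x_0$ is $|\lambda_1|^{-2} f^1_{1\gamma}\overline{f^1_{1\delta}}$.

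Next I would compute $\p_{\bar\delta}\p_\gamma N(x_0)$ by expanding the three-fold product. After discarding every term containing a single first derivative of $g^{\cdot\bar\cdot}$, only three classes of contributions remain: the two outer terms $\p_{\bar\delta}\p_\gamma g^{1\bar\beta}|_{x_0} A_{1\bar\beta}(x_0)$ and $A_{\alpha\bar 1}(x_0)\p_{\bar\delta}\p_\gamma g^{\alpha\bar 1}|_{x_0}$, each collapsing to $|\lambda_1|^2 R^M_{1\bar 1\gamma\bar\delta}$; and the inner term $\p_{\bar\delta}\p_\gamma A_{1\bar 1}(x_0)$. For the inner term I would expand $A_{1\bar 1}(x) = f^i_1(x)h_{i\bar j}(f(x))\overline{f^j_1}(x)$ by the product rule, use holomorphicity to kill $\p_\gamma\overline{f^j_1}$ and $\p_{\bar\delta}f^i_1$, drop all terms that pick up a solitary $h_{i\bar j, k}$ or $h_{i\bar j,\bar l}$, and substitute $h_{i\bar j, k\bar l}(f(x_0)) = -R^N_{i\bar j k\bar l}$ together with $f^i_1\overline{f^j_1}|_{x_0} = |\lambda_1|^2\delta_{i1}\delta_{j1}$ to obtain $\sum_i f^i_{1\gamma}\overline{f^i_{1\delta}} - |\lambda_1|^2 R^N_{1\bar 1 k\bar l} f^k_\gamma \overline{f^l_\delta}$. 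Collecting and subtracting the first-order quotient (which cancels precisely against the $i=1$ summand of $\sum_i$), one arrives at
$$\p_{\bar\delta}\p_\gamma\log N(x_0) = 2 R^M_{1\bar 1\gamma\bar\delta} - R^N_{1\bar 1 k\bar l} f^k_\gamma\overline{f^l_\delta} + \frac{\sum_{i\ne 1}f^i_{1\gamma}\overline{f^i_{1\delta}}}{|\lambda_1|^2}.$$

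Finally, $\p_\gamma\p_{\bar\delta}\log D(x_0) = \p_\gamma\p_{\bar\delta} g^{1\bar 1}(x_0) = R^M_{1\bar 1\gamma\bar\delta}$ strips away exactly one copy of the $R^M$ contribution, and contracting with $v^\gamma\bar v^\delta$ together with $W(x_0) = |\lambda_1|^2$ yields \eqref{eq:52}. I expect the only real obstacle to be organizational: the Leibniz expansion of $\p_{\bar\delta}\p_\gamma N$ produces nine terms in the metric-and-inner factorization and four more hidden inside $\p_{\bar\delta}\p_\gamma A_{1\bar 1}$, so one must track carefully which vanish under the normal-coordinate conditions; once that bookkeeping is in hand, the curvature identifications and the $(i\ne 1)$ split make the identity assemble itself.
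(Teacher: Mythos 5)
Your proposal is correct and follows essentially the same route as the paper: a direct normal-coordinate computation of $\partial\bar\partial\log W$ at $x_0$, using $\partial_\gamma\partial_{\bar\delta}g^{1\bar 1}(x_0)=R^M_{1\bar 1\gamma\bar\delta}$, $h_{i\bar j,k\bar l}(f(x_0))=-R^N_{i\bar jk\bar l}$, and the cancellation of the $i=1$ term of $\sum_i f^i_{1\gamma}\overline{f^i_{1\delta}}$ against the first-order quotient term. The only cosmetic difference is that you split $\log W=\log N-\log g^{1\bar 1}$ explicitly, whereas the paper differentiates the quotient directly and carries the denominator's contribution as the $-R^M_{1\bar 1\gamma\bar\gamma}$ term; the bookkeeping and final identity are identical.
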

\begin{proof} We shall compute $\frac{\partial^2}{\partial z^\gamma\partial z^{\bar{\gamma}}} \log W$. Then the claimed result will follows from this by linear combinations. Under the normal coordinates as specified as above we have that
$$
\frac{\partial g^{\alpha \bar{\beta}}}{\partial z^\gamma}=-g^{\alpha \bar{\delta}} \frac{\partial g_{s\bar{\delta}}}{\partial z^\gamma}g^{s\bar{\beta}}; \quad \frac{\partial^2 g^{\alpha \bar{\beta}}}{\partial z^\gamma\partial z^{\bar{\gamma}}}=-g^{\alpha \bar{\delta}} \frac{\partial^2 g_{s\bar{\delta}}}{\partial z^\gamma \partial z^{\bar{\gamma}}}g^{s\bar{\beta}}=R_{\alpha\bar{\beta}\gamma\bar{\gamma}}.
$$
Hence we have at $x_0$, noting that $\frac{\partial g_{\alpha\bar{\beta}}}{\partial z^\gamma}=0$ and $\frac{\partial h_{i\bar{j}}}{\partial w^k}=0$,
\begin{eqnarray*}
\frac{\partial \log W}{\partial z^{\gamma}} &=&\frac{g^{1\bar{\beta}}f^i_{\alpha \gamma}h_{i\bar{j}}f^{\bar{j}}_{\bar{\beta}}g^{\alpha \bar{1}}}{W}=\frac{f^1_{1\gamma}f^{\bar{1}}_{\bar{1}}}{W};\quad \, \frac{\partial \log W}{\partial z^{\bar{\gamma}}}= \frac{f^{\bar{1}}_{\bar1\bar{\gamma}}f^{1}_{1}}{W}; \\
\frac{\partial^2 \log W}{\partial z^\gamma \partial z^{\bar{\gamma}}} &=& \frac{2\frac{\partial^2 g^{1\bar{\beta}}}{\partial z^\gamma \partial z^{\bar{\gamma}}}f^i_\alpha h_{i\bar{j}}f^{\bar{j}}_{\bar{\beta}}g^{\alpha \bar{1}} +g^{1\bar{\beta}}f^i_\alpha f^{\bar{j}}_{\bar{\beta}} \frac{\partial^2 h_{i\bar{j}}}{\partial w^k\partial w^{\bar{l}}} f^k_{\gamma} f^{\bar{l}}_{\bar{\gamma}}g^{\alpha \bar{1}} +g^{1\bar{\beta}}f^i_{\alpha \gamma}h_{i\bar{j}}f^{\bar{j}}_{\bar{\beta}\bar{\gamma}}g^{\alpha \bar{1}} }{W}\\
&\quad&-\frac{|f^1_{1\gamma}|^2 |f^1_1|^2}{W^2} -R^M_{1\bar{1}\gamma\bar{\gamma}}.
\end{eqnarray*}
The claimed result follows by observing that $W=|f^1_1|^2$, and putting the above computations together. \end{proof}

Now with the above lemma we continue along  the same line of argument of the proof of Theorem \ref{thm41} and obtain at $x_0$ where $W\cdot \varphi$ attains its maximum:
\begin{eqnarray*}
0&\ge& \frac{\partial^2}{\partial z^1 \partial z^{\bar{1}}}\, \left(\log (W\varphi)\right) \ge R^M_{1\bar{1}1\bar{1}}-R^N_{1\bar{1}1\bar{1}}|f^1_1|^2 +  \frac{\partial^2 \log \varphi}{\partial z^1 \partial z^{\bar{1}}} \\
&\ge& -K +\kappa |f^1_1|^2+\frac{\eta''}{R^2\varphi} |\nabla_1 r(x)|^2+\frac{\eta'}{2R \varphi}\left(\nabla^2_{1\bar{1}}r(x)+\nabla^2_{\bar{1}1} r(x)\right)-\frac{|\eta'|^2}{\varphi^2 R^2}\cdot |\nabla_1 r(x)|^2\\
&\ge&  -K +\kappa |f^1_1|^2 -\frac{C_1}{\varphi R^2} -\frac{C_1}{\varphi R} \cdot C(m)\left(\frac{1}{R}+\sqrt{K_2}\right) -\frac{C_1|\eta'|^2}{R^2 \eta \varphi}.
\end{eqnarray*}
Here we applied the complex Hessian comparison theorem of \cite{LW} with $K_2$ being the lower bound of the bisectional curvature in $B_p(R)$. Multiplying $\varphi$ on the both sides we will have at $x_0$ the estimate
$$
\kappa |f^1_1|^2\varphi\le K\varphi+\frac{C_1}{R^2} +\frac{C_1}{ R} \cdot C(m)\left(\frac{1}{R}+\sqrt{K_2}\right) +\frac{C_1|\eta'|^2}{R^2 \eta }.
$$
Hence we arrive at the estimate
\begin{equation}\label{eq:53}
\sup_{B_p(\frac{R}{2})} \|\partial f\|_m^2 \le \frac{1}{\kappa}\left(K+\frac{C_1}{R^2}+\frac{C_1}{ R} \cdot C(m)\left(\frac{1}{R}+\sqrt{K_2}\right)\right).
\end{equation}
The claimed estimate in Theorem \ref{thm:sch} follows by letting $R\to \infty$.
The last statement on the holomorphic map being constant map follows easily by applying the estimate to the case $K=0$.

\begin{corollary}\label{Coro:51} Let $R>0$ be a constant such that the bisectional curvature of $M$ is bounded from below on $B_p(R)$ by $-K_2$. Assume that $H^M(X)\ge -K|X|^4$,   and  that    $H^N(Y)\le -\kappa |Y|^4$, then we have the  estimate
\begin{equation}\label{eq:54}
\sup_{B_p(\frac{R}{2})}  \|\partial f\|_m^2 \le \frac{1}{\kappa}\left(K+\frac{C_1}{R^2}+\frac{C_1}{ R} \cdot C(m)\left(\frac{1}{R}+\sqrt{K_2}\right)\right).
\end{equation}
Here $C_1$ is an absolute constant.
\end{corollary}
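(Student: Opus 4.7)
The desired inequality (\ref{eq:54}) is actually the intermediate estimate (\ref{eq:53}) already derived inside the proof of Theorem \ref{thm:sch}, before the final limit $R\to\infty$ was taken. Hence my plan is to simply repeat that local maximum-principle argument, keeping track of all the cut-off contributions, and stop short of the limit.

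First I would introduce the same cut-off $\varphi_R(x)=\eta(r(x)/R)$ and form the test function $W(x)\varphi_R(x)$, where
$$W(x)=\frac{g^{1\bar\beta}(x)A_{\alpha\bar\beta}(x)g^{\alpha\bar 1}(x)}{g^{1\bar 1}(x)}$$
as in the proof of Theorem \ref{thm:sch}. The point of introducing $W$ is the viscosity/barrier observation from Lemma \ref{lemma:51}: choosing unitary frames at a local maximum $x_0\in B_p(R)$ of $\|\partial f\|_m^2\cdot\varphi_R$ so that $f^i_\alpha(x_0)=\lambda_\alpha\delta_{i\alpha}$ with $|\lambda_1|^2=\|\partial f\|_m^2(x_0)$, we have $W(x_0)=\|\partial f\|_m^2(x_0)$ and $W\le\|\partial f\|_m^2$ nearby, so $W\varphi_R$ attains its local maximum at the same $x_0$ and is smooth there.

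Next I would apply $\partial^2/\partial z^1\partial\bar z^1$ to $\log(W\varphi_R)$ at $x_0$ and invoke Lemma \ref{lemma:52}. The curvature terms become
$$R^M_{1\bar 1 1\bar 1}-R^N_{1\bar 1 1\bar 1}|\lambda_1|^2\;\ge\;-K+\kappa|\lambda_1|^4,$$
using $H^M\ge-K|\cdot|^4$ and $H^N\le-\kappa|\cdot|^4$ applied to $\partial/\partial z^1$ and $\partial f(\partial/\partial z^1)$ respectively, together with the fact that the extra nonnegative term $\sum_{i\ne1}|f^i_{1,1}|^2/W$ can be dropped. The cut-off terms contribute
$$\frac{\eta''}{R^2\varphi_R}|\nabla_1r|^2+\frac{\eta'}{2R\varphi_R}(\nabla^2_{1\bar 1}r+\nabla^2_{\bar 1 1}r)-\frac{|\eta'|^2}{R^2\varphi_R^2}|\nabla_1r|^2.$$
Here I would use the complex Hessian comparison theorem of \cite{LW}, which requires exactly the hypothesis that the bisectional curvature is bounded below by $-K_2$ on $B_p(R)$, to bound $|\nabla^2_{1\bar 1}r|\le C(m)(1/R+\sqrt{K_2})$. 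Combined with $|\eta'|^2/\eta+(-\eta'')\le C_1$, the cut-off contribution at $x_0$ is at most
$$\frac{C_1}{\varphi_R R^2}+\frac{C_1}{\varphi_R R}\,C(m)\!\left(\frac{1}{R}+\sqrt{K_2}\right).$$

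Putting this together, the inequality $0\ge\partial^2_{1\bar 1}\log(W\varphi_R)$ at $x_0$ becomes, after multiplying through by $\varphi_R$ and evaluating at $x_0$,
$$\kappa\,\|\partial f\|_m^2(x_0)\,\varphi_R(x_0)\;\le\;K\varphi_R(x_0)+\frac{C_1}{R^2}+\frac{C_1}{R}\,C(m)\!\left(\frac{1}{R}+\sqrt{K_2}\right).$$
Since $\varphi_R\equiv 1$ on $B_p(R/2)$ and $W\varphi_R$ achieves on $B_p(R)$ a value at least equal to $\sup_{B_p(R/2)}\|\partial f\|_m^2$, dividing by $\kappa$ yields precisely (\ref{eq:54}).

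The only genuinely substantive ingredient is the complex Hessian comparison, which is exactly where the bisectional curvature lower bound on $B_p(R)$ enters; everything else is a transcription of the proof of Theorem \ref{thm:sch} with the limit $R\to\infty$ suppressed. I do not anticipate any essential obstacle beyond bookkeeping the constants.
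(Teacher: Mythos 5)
Your argument is essentially the paper's own proof: the corollary is exactly the localized estimate (\ref{eq:53}) obtained in the proof of Theorem \ref{thm:sch} before letting $R\to\infty$, and you reproduce that argument faithfully (the smooth barrier $W$ via Lemma \ref{lemma:51}, the computation of Lemma \ref{lemma:52} at the maximum of $W\varphi_R$, and the complex Hessian comparison of \cite{LW}, which is where the bisectional lower bound $-K_2$ on $B_p(R)$ enters). One correction: the curvature term produced by Lemma \ref{lemma:52} is $R^N_{1\bar 1\,\partial f(v)\,\overline{\partial f(v)}}$, which is quadratic in $\partial f(v)$, so applying $H^N\le-\kappa|Y|^4$ to $Y=\partial f(\partial/\partial z^1)=\lambda_1\,\partial/\partial w^1$ gives $R^N_{1\bar 1 1\bar 1}\le-\kappa$ and hence the lower bound $-K+\kappa|\lambda_1|^2$ rather than $-K+\kappa|\lambda_1|^4$ as in your intermediate display (and one should note $\lambda_1\neq 0$ at $x_0$, the case $W(x_0)=0$ being trivial); your final inequality $\kappa\,\|\partial f\|_m^2\varphi_R\le K\varphi_R+\cdots$ is the correct one and is exactly what the corrected step yields.
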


For any point $p$, we can similarly define the {\it lower bisectional curvature radius} being the biggest  $R$ such that the bisectional curvature is bounded by $-\frac{1}{R^2}$ on $B_p(R)$. Such radius is denoted as $r^{l}_{B}(p)$. Clearly if the bisectional curvature is nonnegative $r^l_B(p)=\infty$. For $R= r^{l}_{B}(p)  $ the above estimate has the simple form:
\begin{equation}\label{eq:54}
\sup_{B_p(\frac{R}{2})} \|\partial f\|_m^2 \le \frac{1}{\kappa}\left(K+\frac{C_1}{R^2}\right).
\end{equation}

A consequence from the proof also implies the following result which can be interesting in the  study of  holomorphic (even memomorphic) maps between compact K\"ahler manifolds.
\begin{corollary}[Hoop Lemma-Streching]\label{coro-hoop2}
 (i) Assume that $M$ is compact, $H^M(X)\ge K|X|^4$,   and     $H^N(Y)\le \kappa |Y|^4$, with $K, \kappa>0$. Then for any nonconstant $f: M\to N$
 $$
 \max_x\|\partial f\|^2_m(x)\ge \frac{K}{\kappa}.
 $$
 (ii) Assume that $M$ is compact, $Ric^M\ge K_1$, and that $H^N(Y)\le \kappa |Y|^4$, with $K_1,\kappa>0$. Then for any non-constant holomorphic map $f:M\to N$
 $$
 \max_{x}\|\partial f\|^2(x) \ge \frac{K_1}{\kappa}.
 $$
\end{corollary}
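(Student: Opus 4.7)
The plan is to run, in parallel, the maximum-principle arguments that appear in the proofs of Theorem~\ref{thm:sch} and Corollary~\ref{coro-to-coro22}; the compactness of $M$ removes the need for cutoff functions, and the sign structure of the Bochner-type identities of Sections~2 and~5 automatically converts the Schwarz-type upper bounds into the Hoop-type lower bounds. Thus part~(i) will be handled with the smooth barrier $W$ of Section~5 together with Lemma~\ref{lemma:52}, while part~(ii) will use the trace Bochner identity from Lemma~\ref{lemma1-1} (in the diagonalized form displayed in the proof of Corollary~\ref{coro-to-coro22}) combined with a Berger-type polarization of $H^N$.

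For part~(i), I would imitate the proof of Theorem~\ref{thm:sch} verbatim, with the cutoff removed. Since $f$ is nonconstant, $\|\partial f\|_m^2$ attains a strictly positive maximum at some $x_0\in M$. Choose holomorphic normal coordinates at $x_0$ and $f(x_0)$ and perform the unitary changes of frame of Section~5 which put $\partial f$ in singular-value form with $|\lambda_1|^2\geq\cdots\geq|\lambda_m|^2$, so that $\|\partial f\|_m^2(x_0)=|\lambda_1|^2$. With $W(x)$ defined exactly as in Section~5, Lemma~\ref{lemma:51} makes $W$ a smooth barrier from below for $\|\partial f\|_m^2$ near $x_0$ with equality at $x_0$; hence $W$ has a local maximum at $x_0$ and $\partial\bar{\partial}\log W|_{x_0}\leq 0$. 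Applying Lemma~\ref{lemma:52} with $v=\partial/\partial z^1$ and discarding the nonnegative gradient term yields
\[
0 \;\geq\; R^M_{1\bar{1}1\bar{1}} - R^N_{1\bar{1}1\bar{1}}\,|\lambda_1|^2 \;\geq\; K - \kappa\,|\lambda_1|^2,
\]
using $R^M_{1\bar{1}1\bar{1}}=H^M(\partial/\partial z^1)\geq K$ and $R^N_{1\bar{1}1\bar{1}}=H^N(\partial/\partial w^1)\leq\kappa$. Hence $\|\partial f\|_m^2(x_0)=|\lambda_1|^2\geq K/\kappa$, which is the conclusion of part~(i).

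For part~(ii), $W$ only controls the top singular value, so I plan to apply the Laplacian Bochner identity to $u=\|\partial f\|^2$ at an interior maximum $x_0$. With $\partial f$ diagonalized at $x_0$, the identity displayed in the proof of Corollary~\ref{coro-to-coro22} gives
\[
0 \;\geq\; \Delta\|\partial f\|^2 \;\geq\; -\sum_{\alpha,\gamma=1}^m R^N_{\alpha\bar{\alpha}\gamma\bar{\gamma}}|\lambda_\alpha|^2|\lambda_\gamma|^2 + \sum_{\alpha=1}^m Ric^M_{\alpha\bar{\alpha}}|\lambda_\alpha|^2,
\]
and the Ricci term is at least $K_1\|\partial f\|^2(x_0)$. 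To bound the target-curvature sum from above by $\kappa\|\partial f\|^4(x_0)$, I would apply $H^N(Y_\theta)\leq\kappa|Y_\theta|^4=\kappa\|\partial f\|^4(x_0)$ to
\[
Y_\theta \;=\; \sum_{\alpha=1}^m e^{\sqrt{-1}\theta_\alpha}\,|\lambda_\alpha|\,\frac{\partial}{\partial w^\alpha}
\]
and integrate over $(\theta_1,\ldots,\theta_m)\in[0,2\pi]^m$. The K\"ahler symmetries $R_{i\bar{j}k\bar{l}}=R_{k\bar{j}i\bar{l}}=R_{i\bar{l}k\bar{j}}$ force the surviving contributions to assemble into
\[
\sum_\alpha H^N\bigl(\partial/\partial w^\alpha\bigr)|\lambda_\alpha|^4 \;+\; 4\sum_{\alpha<\gamma}R^N_{\alpha\bar{\alpha}\gamma\bar{\gamma}}|\lambda_\alpha|^2|\lambda_\gamma|^2 \;\leq\; \kappa\,\|\partial f\|^4,
\]
and averaging this with the trivial diagonal estimate $\sum_\alpha H^N(\partial/\partial w^\alpha)|\lambda_\alpha|^4\leq\kappa\sum|\lambda_\alpha|^4\leq\kappa\|\partial f\|^4$ produces $\sum_{\alpha,\gamma}R^N_{\alpha\bar{\alpha}\gamma\bar{\gamma}}|\lambda_\alpha|^2|\lambda_\gamma|^2\leq\kappa\|\partial f\|^4$. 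Substituting into the Bochner inequality gives $K_1\|\partial f\|^2(x_0)\leq\kappa\|\partial f\|^4(x_0)$, and nondegeneracy at the maximum (since $f$ is nonconstant) yields $\|\partial f\|^2(x_0)\geq K_1/\kappa$.

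The main obstacle is the polarization step in part~(ii): the phase average must be shown to annihilate exactly those $R^N_{\alpha\bar{\beta}\gamma\bar{\delta}}$ with $\{\alpha,\gamma\}\neq\{\beta,\delta\}$ as multisets, and the surviving ordered-index terms must be collected correctly through the full K\"ahler symmetries of $R^N$. This is a direct but slightly delicate bookkeeping exercise of Berger type. Part~(i), by contrast, is completely routine once one notices that the compactness of $M$ renders the cutoff apparatus of Theorem~\ref{thm:sch} unnecessary.
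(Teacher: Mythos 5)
Your proposal is correct and takes essentially the paper's own route: part (i) is exactly the cutoff-free version of the proof of Theorem \ref{thm:sch}, using the barrier $W$ of Lemma \ref{lemma:51} and the identity of Lemma \ref{lemma:52} at the interior maximum, which is precisely how the paper derives the corollary ``from the proof''. For part (ii) the paper indicates the same maximum-principle/trace-Bochner argument with the key bound $\sum_{\alpha,\gamma}R^N_{\alpha\bar{\alpha}\gamma\bar{\gamma}}|\lambda_\alpha|^2|\lambda_\gamma|^2\le \kappa\|\partial f\|^4$ obtained by modifying \eqref{eq:a1} via the sphere average in the Appendix; your torus-phase polarization combined with the diagonal estimate is a correct, equivalent way to produce the same inequality, so the two proofs coincide in substance.
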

The proof of the second statement uses an estimate modifying (\ref{eq:a1}) in the appendix. The part (i) of the result is more satisfying since it only involves the holomorphic sectional curvature of both the target and domain manifolds.

The proof of the Schwarz Lemma  implies the following result.

\begin{corollary} Let $(M^m, g)$ be a compact K\"ahler manifold, and $(N^n, h)$ be a another K\"ahler manifold. Assume either that  $H^M(X)>0$ and $H^N(Y)\le 0$, or $H^M(X)\ge 0$ and $H^N(Y)<0$. Then any holomorphic map $f:M\to N$ must be a constant.
\end{corollary}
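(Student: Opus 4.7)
The strategy is to run the maximum-principle argument from the proof of Theorem~\ref{thm:sch} without any cut-off, exploiting the compactness of $M$. Suppose for contradiction that $f$ is not constant. Then $\|\partial f\|_m^2$ is a continuous, nonnegative, not identically zero function on the compact $M$, and so attains a strictly positive maximum at some point $x_0$.

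Following verbatim the setup in the proof of Theorem~\ref{thm:sch}, I would choose holomorphic normal coordinates $(z_1,\dots,z_m)$ at $x_0$ and $(w_1,\dots,w_n)$ at $f(x_0)$, then adjust the unitary frames so that $f^i_\alpha(x_0) = \delta_{i\alpha}\lambda_\alpha$ with $|\lambda_1|^2 \ge |\lambda_2|^2 \ge \cdots \ge |\lambda_m|^2$; in particular $|\lambda_1|^2 = \|\partial f\|_m^2(x_0) > 0$. I would then introduce the smooth local function
$$W(x) = \frac{g^{1\bar\beta}(x)\, A_{\alpha\bar\beta}(x)\, g^{\alpha\bar 1}(x)}{g^{1\bar 1}(x)},$$
which by Lemma~\ref{lemma:51} is a smooth lower barrier for $\|\partial f\|_m^2$ with equality at $x_0$. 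Hence $W$ itself attains a local maximum at $x_0$.

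Applying Lemma~\ref{lemma:52} with $v = \partial/\partial z^1$ at $x_0$, the negative semidefiniteness of the complex Hessian of $\log W$ at an interior maximum gives
$$0 \;\ge\; R^M_{1\bar 1 1\bar 1} \;-\; |\lambda_1|^2\, R^N_{1\bar 1 1\bar 1} \;+\; \frac{\sum_{i\ne 1}|f^i_{11}|^2}{W}.$$
Because $\partial/\partial z^1$ has unit length at $x_0$ and $\partial/\partial w^1$ has unit length at $f(x_0)$ (normal coordinates), the two curvature components are precisely the unnormalized holomorphic sectional curvatures $H^M(\partial/\partial z^1)$ and $H^N(\partial/\partial w^1)$. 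Dropping the non-negative last term I would conclude
$$|\lambda_1|^2\, H^N(\partial/\partial w^1) \;\ge\; H^M(\partial/\partial z^1).$$
In the case $H^M>0$ and $H^N\le 0$ the right side is strictly positive while the left side is non-positive, a contradiction. In the case $H^M\ge 0$ and $H^N<0$ the left side is strictly negative (since $|\lambda_1|^2>0$) while the right side is non-negative, again a contradiction. In both cases $\|\partial f\|_m^2 \equiv 0$, so $f$ is constant.

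The only point that needs verification is the smoothness of $W$ near $x_0$, which is immediate: $g^{1\bar 1}(x_0)=1$ in normal coordinates, and $W$ is a rational expression in the coordinate data of $g$, $h$, and $\partial f$ with nonzero denominator in a neighborhood of $x_0$. I do not anticipate any serious obstacle; the result is essentially a one-point application of the maximum principle to the pointwise barrier already constructed for Theorem~\ref{thm:sch}, and the two cases differ only in which factor ($H^M$ or $H^N$) supplies the strict sign needed for the contradiction.
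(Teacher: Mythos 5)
Your argument is correct and is essentially the paper's own proof: the paper likewise takes the point $x_0$ where $\|\partial f\|_m^2$ attains its (assumed nonzero) maximum on the compact $M$, uses the barrier $W$ from Lemma~\ref{lemma:51} together with Lemma~\ref{lemma:52}, and reaches the same contradiction $0\ge R^M_{1\bar 1 1\bar 1}-R^N_{1\bar 1 1\bar 1}\|\partial f\|_m^2(x_0)>0$, with the two sign cases handled exactly as you describe. No gap; your extra remark on the smoothness and positivity of $W$ near $x_0$ is a harmless verification the paper leaves implicit.
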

\begin{proof} Assume not, then $\|\partial f\|^2_m (x)$ attains a nonzero maximum somewhere, say at $x_0$. Applying the above proof of the Schwarz Lemma, at $x_0$, we have
$$
0\ge \frac{\partial^2 \log W}{\partial z^1 \partial z^{\bar{1}}} \ge R^M_{1\bar{1}1\bar{1}}-R^N_{1\bar{1}1\bar{1}}\|\partial f\|_m^2(x_0)>0.
$$
This contradiction proves the  result.
\end{proof}

Note that for the first case, namely under the assumptions $H^M(X)>0$ and $H^N(Y)\le 0$, the result also follows from the above Hoop Lemma part (i) by taking $\kappa\to 0$. This part was also proved independently in \cite{Yang} using a different method.

\section*{Appendix}
First we include an alternate algebraic part of the proof, by Royden, of the ``classical" Schwarz Lemma \cite{Roy}:

{\it Let $f: M^m\to N^n$ be a holomorphic map. Assume that the holomorphic sectional curvature of $N$, $H(Y)\le -\kappa |X|^4$ and the Ricci curvature of $M$, $Ric(X, \overline{X})\ge -K |X|^2$ with $\kappa, K>0$. Then
$$
\|\partial f\|^2 \le \frac{2d}{d+1}\frac{K}{\kappa}.
$$
Here $d=rank(f)$.}

The argument, which is due to F.  Zheng,  proves a lemma  of Royden.

The  estimate $\|\partial f\|^2\le\frac{K}{\kappa}$ was proved (by S.-T. Yau \cite{Yau-sch}) either for $M$ being a Riemann surface, or for the case $m\ge 2$ assuming that the bisectional curvature of $N$ is bounded from above by $-\kappa$ (cf. \cite{Yau-sch}). The above result of Royden covers Yau's estimate for the Riemann surfaces case while allowing weaker holomorphic sectional curvature upper bound on the target manifolds for any dimension of the domain manifolds. The algebraic ingredient is  needed in showing, under the assumption that $H^N(X)\le -\kappa |X|^4$, 
\begin{equation}\label{eq:B-a}
\Delta \|\partial f\|^2 \ge \frac{d+1}{2d}\kappa \|\partial f\|^4 -K\|\partial f\|^2.
\end{equation}

By  taking trace in (\ref{Boch1}) of Lemma \ref{lemma1-1} we have that
$$
\Delta \|\partial f\|^2 \ge -g^{\alpha \bar{\beta}}g^{\gamma \bar{\delta}}R^N (\partial f_\alpha,  \bar{\partial}f_{\bar{\beta}}, \partial f_\gamma, \bar{\partial}f_{\bar{\delta}})+g^{\alpha \bar{\beta}}g^{\gamma\bar{\delta}} \langle \partial f(R^M_{\gamma \bar{\delta}})_{\alpha}, \bar{\partial}f_{\bar{\beta}}\rangle.
$$
With respect to  normal coordinates chosen before (so that $f^i_\alpha =\delta_{i\alpha} \lambda_i$) the above can be written as
$$
\Delta \|\partial f\|^2 \ge-R^N_{i\bar{i}j\bar{j}}|f^i_\alpha|^2|f^j_\gamma|^2+Ric^M_{\alpha \bar{\alpha}}|f^i_\alpha|^2\ge -R^N_{i\bar{i}j\bar{j}}|f^i_\alpha|^2|f^j_\gamma|^2 -K \|\partial f\|^2.
$$
Thus the result follows easily after the point-wise estimate (under the assumption $H(X)\le -\kappa |X|^4$):
\begin{equation}\label{eq:a1}
R^N_{i\bar{i}j\bar{j}}|f^i_\alpha|^2|f^j_\gamma|^2\le -\frac{d+1}{2d}\kappa \|\partial f\|^4.
\end{equation}
To prove this, consider the vector $Y=\sum_{\lambda_i\ne 0} w^i\lambda_i \frac{\partial}{\partial w^i}$ (if $m\le n$, $Y\in \partial f(T'_xM)$). Direct calculation shows that
$$
\aint_{\mathbb{S}^{2d-1}} R^N(Y, \overline{Y}, Y, \overline{Y})\, d\theta(w)=\frac{2}{d(d+1)}R^N_{i\bar{i}j\bar{j}}|\lambda_i|^2|\lambda_j|^2
=\frac{2}{d(d+1)}R^N_{i\bar{i}j\bar{j}}|f^i_\alpha|^2|f^j_\gamma|^2.
$$
On the other hand
\begin{eqnarray*}
\aint_{\mathbb{S}^{2d-1}} R^N(Y, \overline{Y}, Y, \overline{Y})\, d\theta(w)&\le& -\kappa \aint_{\mathbb{S}^{2d-1}}|Y|^4\, d\theta(w)\\
&=& \frac{-\kappa}{d(d+1)} \left(2\sum |\lambda_i|^4+\sum_{i\ne j} |\lambda_i|^2\, |\lambda_j|^2\right)\\
&=&-\frac{\kappa}{d(d+1)}\left( \|\partial f\|^4+\sum |\lambda_i|^4\right)\\
&\le&-\frac{\kappa}{d(d+1)}\frac{d+1}{d} \|\partial f\|^4.
\end{eqnarray*}
Putting together we have (\ref{eq:a1}).

Secondly we prove that the  $k$-hyperbolicity defined in the introduction is the same as: For any $x\in N$, $v_1\wedge\cdots \wedge v_k\ne 0$ with $v_i\in T'_xN$, the pseudo norm
$$
\|v_1\wedge\cdots\wedge v_k\|_k\doteqdot \inf_{ df(\tilde{v}_1\wedge\cdots\wedge \tilde{v}_k)=v_1\wedge\cdots\wedge v_k} \|\tilde{v}_1\wedge\cdots\wedge \tilde{v}_k\|_{p}
$$
 with  $f:\mathbb{D}^k(1)\to N$ being holomorphic and  $f(0)=x$, is a norm. Here $\|\cdot\|_p$ denotes the Poincar\'e metric on $\mathbb{D}^k$ (naturally extended to $\wedge_k T'N$). The proof is parallel to the $k=1$ case. It is easy to show that if there exists a nondegenerate $f: \mathbb{C}^k \to N$, then the above pseudo-norm has to vanish for any $v_1\wedge\cdots v_k\ne 0$  in $\wedge_k T_x'N$  with $df(\tilde{v}_1\wedge\cdots\wedge \tilde{v}_k)=v_1\wedge\cdots\wedge v_k$, since if we let $\rho_r(z)=\frac{z}{r}$, and define $f_\ell=f(\rho_{\frac{1}{\ell}}(z))$, it is clear  $df_\ell(\frac{\tilde{v}_1}{\ell}\wedge \cdots \wedge \frac{\tilde{v}_k}{\ell})=v_1\wedge\cdots\wedge v_k$. But $\|\frac{\tilde{v}_1}{\ell}\wedge \cdots \wedge \frac{\tilde{v}_k}{\ell}\|_p \to 0$ as $\ell \to \infty$. The other direction utilizes that $N$ is compact. It suffices to show  that if the pseudo-norm is not a norm at $x$, then one can construct a nondegenerate holomorphic $f: \mathbb{C}^k\to N$.  First equip $N$ with a Hermitian metric $h$, and consider $\mathcal{F}=\{ \mbox{ holomorphic } g: \mathbb{D}^k(1)\to N, g(0)=x\}$. We claim that there exists $g_i$ such that $D(g_i)(0)\to \infty$. Otherwise there will be a uniform upper bound $A$ for $D(g)$ for any $g\in \mathcal{F}$. This would implies that
$$
0<\|v_1\wedge\cdots\wedge v_k\|_h\le \sqrt{A} \|\tilde{v}_1\wedge\cdots\wedge \tilde{v}_k\|_{p}.
$$
A contradiction! Now for the $g_i$ with $D(g_i)(0)\to \infty$ we let $\ell_i=D(g_i)(0)$ and consider $f_i=g_i(\rho_{\ell_i}(z))$, which shall be defined on sequence of balls whose union covers the whole $\mathbb{C}^k$. Clearly $D(f_i)(0)=1$. Restricted to any compact subset $K\subset \mathbb{C}^k$, by the compactness of $N$ and passing to a subsequence (still denoted as $\{f_i\}$) we assume  $f_i\to f_\infty$ for some $f_\infty: \mathbb{C}^k\to N$. Clearly $D(f_\infty)(0)=1$, hence non-degenerate.

\section*{Acknowledgments} { We would like to thank,  Bernard Shiffman  for the reference \cite{Eis}, and him and Min Ru for their interest in the results related to the  $k$-hyperbolicity, Fangyang Zheng for pointing out a discrepancy in an  earlier version,  his proof in the Appendix and many discussions.}


\begin{thebibliography}{A}


\bibitem{Cheng} S. Y. Cheng \textit{ Liouville theorem for harmonic maps.} Geometry of the Laplace operator (Proc. Sympos. Pure Math., Univ. Hawaii, Honolulu, Hawaii, 1979), pp. 147--151,
Proc. Sympos. Pure Math., XXXVI, Amer. Math. Soc., Providence, R.I., 1980.
		
\bibitem{CY} S. Y. Cheng and S. T. Yau, \textit{Differential equations on Riemannian manifolds and their geometric applications}. Comm. Pure Appl. Math. \textbf{28}(1975), 333--354.



\bibitem{Eis} D. Eisenman, \textit{Intrinsic Measures on Complex Manifolds and Holomorphic Mappings.} Mem. Amer. Math. Soc. No. \textbf{96} (1970).

\bibitem{Gu} C. E. Guti\'errez, \textit{ The Monge-Amp\`ere Equation}. 2nd edition. Progress in Nonlinear Differential Equation and Their Applications. Vol \textbf{89} (2016), Birkh\"auser.

    \bibitem{Hitchin}
N. Hitchin, \textit{ On the curvature of rational surfaces.} In Differential Geometry ({\em Proc. Sympos. Pure Math., Vol XXVII, Part 2, Stanford University, Stanford, Calif., 1973}), pages 65-80. Amer. Math. Soc., Providence, RI, 1975.

\bibitem{Jost-Yau} J. Jost and S.-T. Yau, \textit{ Harmonic maps and supperrigidity.}  Tsing Hua Lectures on Geometry and Analysis, 213--246, Int. Press, Cambridge, MA 1997.


\bibitem{Ko3} S. Kobayashi,    \textit{Hyperbolic Manifolds and Holomorphic Mappings.} Pure and Applied Mathematics, \textbf{2}. Dekker, New York, 1970.


\bibitem{Kobayashi-H}  S. Kobayashi,    \textit{Hyperbolic Complex Spaces.} Springer, New York, 1998.


 \bibitem{LW} P. Li and J.-P.  Wang,  \textit{Comparison theorem for K\"ahler manifolds and positivity of spectrum.} J. Differential Geom. \textbf{69} (2005), no. 1, 43--74.


\bibitem{Liu}G. Liu, \textit{Three-circle theorem and dimension estimate for holomorphic functions on K\"ahler
    manifolds.} Duke Math. J. \textbf{165} (2016), no. 15, 2899--2919.



\bibitem{MSYe} N. Mok, Y.-T. Siu and S-K. Yeung, \textit{ Geometric superrigidity.} Invent. Math. \textbf{113} (1993), no. 1, 57–-83.

 \bibitem{MK}  J.   Morrow and K.  Kodaira, \textit{ Complex manifolds.} Holt. Rinehart and Winston, New
     York-Montreal-London, 1971.

\bibitem{Ni-JDG} L. Ni,  \textit{ Vanishing theorems on complete K\"ahler manifolds and their applications.} J.
    Differential Geom. \textbf{50} (1998), no. 1, 89--122.

  \bibitem{NN} L. Ni and Y. Y.    Niu, \textit{ Sharp differential estimates of Li-Yau-Hamilton type for positive $(p,p)$-forms on K\"ahler manifolds}.    Comm. Pure Appl. Math. \textbf{64} (2011), 920--974.

\bibitem{Ni-Niu2}
 L. Ni and Y. Y. Niu, \textit{ A gap theorem on K\"ahler manifold with nonnegative orthogonal bisectional curvature.} ArXiv preprint:1708.03534.


\bibitem{NT1} L. Ni and L.-F. Tam, \textit{ Plurisbuharmonic functions and the structure of complete K\"ahler manifolds with nonnegative curvature}. J. Differential Geom. \textbf{64} (2003), 457--524.
		
\bibitem{NT2}		L. Ni and L.-F. Tam, \textit{Poincar\'e-Lelong equation via the Hodge-Laplace heat equation.}  Compositio Math. \textbf{149} (2013), 1856--1870.
		



\bibitem{Ni-Wang} L. Ni and K. Wang, \textit{ Isoperimetric comparison via viscosity.} Jour. Geom. Anal. \textbf{26}(2016), no. 4, 2831--2841.



\bibitem{NZ} L. Ni and F. Zheng, \textit{ Comparison and vanishing  theorems for K\"ahler manifolds.} ArXiv preprint:1802.08732.

\bibitem{Ni-Zheng2} L. Ni and F. Zheng, \textit{ Positivity and Kodaira embedding theorem.} ArXiv preprint:1804.096096.


 \bibitem{Omori} H.   Omori, \textit{
Isometric immersions of Riemannian manifolds.}
J. Math. Soc. Japan \textbf{19} (1967),  205--214.


\bibitem{Roy} H. L. Royden, \textit{ The Ahlfors-Schwarz lemma in several complex variables.} Comment. Math. Helv. \textbf{55} (1980), no. 4, 547--558.
    
\bibitem{Tosatti-Yang} V.  Tosatti and X. Yang, \textit{ An extension of a theorem of Wu-Yau.} J. Differential Geom. \textbf{107} (2017), no. 3, 573–-579.

\bibitem{Wu-Yau}  D. Wu and  S.-T. Yau, \textit{ Negative Holomorphic curvature and positive canonical bundle,} Invent. Math. \textbf{204} (2016), no. 2, 595--604.
    

\bibitem{Yang} X. K. Yang, \textit{ RC-positivity, vanishing theorem and rigidity of holomorphic maps}. Preprint.

\bibitem{Yau-ma} S. T. Yau, \textit{ Intrinsic measures of compact complex manifolds.} Math. Ann. \textbf{212} (1975), 317--329.

\bibitem{Yau-sch} S. T. Yau, \textit{ A general Schwarz lemma for K\"ahler manifolds.} Amer. J. Math. \textbf{100}  (1978), no. 1, 197--203.

\bibitem{Zheng-B} F.  Zheng, \textit{ Complex differential geometry.} AMS/IP, Studies in Advanced Math. \textbf{18}(2000).
		




\end{thebibliography}
\end{document}